\newtheorem{thm}{Theorem}[section]
\newtheorem{lemma}[thm]{Lemma}
\newtheorem{prop}[thm]{Proposition}
\newtheorem{cor}[thm]{Corollary}
\theoremstyle{definition}
\newtheorem{defn}[thm]{Definition}
\newtheorem{algorithm}[thm]{Algorithm}
\theoremstyle{remark}
\newtheorem{example}[thm]{Example}
\numberwithin{equation}{section}
 \newcommand{\NN}{\mathbb{N}}
\newcommand{\RR}{\mathbb{R}}
\newcommand{\CC}{\mathbb{C}}
\newcommand{\PP}{\mathbb{P}}
 \newcommand{\cB}{\mathcal{B}}
 \newcommand{\D}{\Delta}
\newcommand{\s}{\sigma} 
\newcommand{\M}{\overline{M}}
\newcommand{\LM}{\overline{L}}
\newcommand{\rlm}{\rho_{\overline{L}}}
\newcommand{\rmn}{\rho_{\overline{M}}}
\newcommand{\cix}{\mathcal{CI}(X)}
\newcommand{\ci}{\mathcal{CI}}
\newcommand{\bmov}{\overline{\mathrm{Mov}}}
\newcommand{\mori}{\overline{\mathrm{NE}}}
\newcommand{\Nef}{\mathrm{Nef}}
\newcommand{\pEff}{\overline{\mathrm{Eff}}}
\newcommand{\toricthreefold}{%
		\psset{unit=1.5cm}
                \begin{pspicture}(-3,-2)(3,2.2)
                  
                   % dashed lines of (x=0) plane	
                   \psset{linecolor=blue,linestyle=solid,linewidth=0.5pt}
                   \psline{-}(2,0)(2,2)
                   \psline{-}(2,2)(0,2)

                   % dashed lines between (x=0) and (x=1) planes
                   \psset{linecolor=purple,linestyle=solid,linewidth=0.6pt}
                   \psline{-}(0,2)(1,1.5)                  
                   \psline{-}(2,0)(1, -0.5)
                   \psline{-}(2,0)(1, 1.5)
                   \psline{-}(1,1.5)(2,2)

                   % dashed lines of (x=1) plane
                   \psset{linecolor=red,linestyle=solid,linewidth=0.8pt}
                   \psline{-}(1, -0.5)(1, 1.5)

                   % dashed lines between (x=0) and (x=-1) planes
                   \psset{linecolor=green,linestyle=solid,linewidth=0.3pt}
                  
                   % dashed lines between (x=1) and (x=-1) planes
                   \psset{linecolor=orange,linestyle=solid,linewidth=0.4pt}
                   \psline{-}(1,1.5)(-3, -1.5)
                   \psline{-}(1, -0.5)(-3, -1.5)
                   
                   % dashed lines between (x=0) and (x=-1) planes
                   \psset{linecolor=orange,linestyle=solid,linewidth=0.3pt}
                   \psline{-}(0,2)(-3, -1.5)
                   \psline{-}(2,2)(-3, -1.5)
                   \psline{-}(2,0)(-3, -1.5)

                  % rays 
                  \psset{linecolor=black, linestyle=solid,linewidth=1.0pt}
                  \psline{->}(1, -0.5) 	% u1 
                  \psline{->}(1, 1.5)	% u4
                  
                  \psset{linecolor=black, linestyle=solid,linewidth=0.8pt}
                  \psline{->}(2, 0) 	% u2
                  \psline{->}(0,2) 	% u3
                  \psline{->}(2,2)	% u5
                  
                  \psset{linecolor=black, linestyle=solid, linewidth=0.4pt}
                  \psline{->}(-3, -1.5)	% u0
                  
                  \rput(-3,-1.62){$u_0$}
                  \rput(1,-0.62){$u_1$}
                  \rput(2.0, -0.12){$u_2$}
                  \rput(0.0, 2.12){$u_3$}

                  \rput(2.0, 2.12){$u_{5}$}
                  \rput(1.1, 1.62){$u_{4}$}
                  
                  \end{pspicture}}
\author{Paul L. Larsen}
\title[Complete intersections and movable curves on $\overline{M}_{0,6}$]{Complete intersections and movable curves on the moduli space of six-pointed rational curves}
\date{\today}
\address{Humboldt-Universit\"at zu Berlin, Institut f\"ur Mathematik, 10099 Berlin, Germany}
\email{larsen@mathematik.hu-berlin.de}
\numberwithin{equation}{section}
\begin{document}
%\selectlanguage{english}
\begin{abstract}A curve on a projective variety is called movable if it belongs to an algebraic family of curves covering the variety. We consider when the cone of movable curves can be characterized without existence statements of covering families by studying the complete intersection cone on a family of blow-ups of complex projective space, including the moduli space of stable six-pointed rational curves, $\overline{M}_{0,6}$, and the permutohedral or Losev-Manin moduli space of four-pointed rational curves. Our main result is that the movable and complete intersection cones coincide for the toric members of this family, but differ for the non-toric member, $\overline{M}_{0,6}$. The proof is via an algorithm that applies in greater generality. We also give an example of a projective toric threefold for which these two cones differ.

\end{abstract}
\maketitle

%\tableofcontents

\section{Introduction}
\label{secCI:introduction} 
A foundational result in the geometry of projective varieties is Kleiman's theorem \cite{MR0206009}, which states the closure of the ample cone equals the nef cone. The containment of the ample cone in the nef cone is easy to prove, and since the nef cone is by definition closed, one inclusion of cones follows. The proof of the opposite inclusion is more involved; see \cite{MR0206009}, or Section 1.4.C of \cite{MR2095471}.

By duality, Kleiman's theorem is equivalent to the equality $\mori(X)^{\vee} = \overline{\mathrm{Amp}}(X)$. It is natural to wonder which other cones of divisor and curve classes fit into a Kleiman-type duality. For the pseudoeffective cone of divisor classes, it is not difficult to see that dual cone $\pEff(X)^{\vee}$ contains the closure of the cone of movable curve classes, where a reduced, irreducible curve $C$ is called a \emph{movable curve} if $C= C_{t_0}$ belongs to an algebraic family $(C_t)_{t \in S}$ covering $X$. To see this inclusion, 
let $D$ be an effective prime divisor, and let $C$ be a movable curve. Since the support of $D$ is a codimension one subvariety, there must exist an irreducible curve $C'$ in the covering family containing $C$ such that $C'$ is not contained in the support of $D$, hence $C' \cdot D \geq 0$. Since algebraic equivalence is finer than numerical equivalence, it follows that $C \cdot D \geq 0$. The other inclusion was proved in 2004 by Boucksom, Demailly, P\v aun, and Peternell in \cite{bdpp}, where they also give an alternative characterization of the cone of movable curve classes:
\begin{defn}
\label{defCI:movable}
Let $\mu: X' \to X$ be a projective, birational morphism. A class $\gamma \in \mori(X)$ is called \emph{movable} if there exists a representative one-cycle $C$ and ample divisors $A_1, \ldots, A_{\dim(X) -1}$ on $X'$ such that
\begin{equation}
\mu_*(A_1 \cdot \ldots \cdot A_{\dim(X)-1}) = C \nonumber.
\end{equation}
The closure of the cone generated by movable classes in $\mori(X)$ is called the \emph{movable cone}, and is denoted $\bmov(X)$.
\end{defn}

Both formulations involve non-trivial existence statements: in the first, to see that a curve $C$ is movable, we must prove the existence of a covering family to which it belongs, and in the second, we require knowledge about all projective, birational morphisms to the variety $X$. If, however, we consider only the identity morphism, we obtain a subcone of $\bmov(X)$ called the \emph{complete intersection cone}:
\begin{defn}
\label{defCI:ci}
The \emph{complete intersection} cone of $X$, denoted $\ci(X)$, is the closed cone generated by the classes of all smooth curves obtained as an intersection of $\dim(X)-1$ ample divisors on $X$.
\end{defn}

The aim of this paper is to investigate when these cones of curve classes do and do not coincide for two natural testing grounds: moduli spaces of curves and toric varieties. Were there actual equality $\ci(X) = \bmov(X)$, then we could characterize movable curves without having to first classify all birational morphisms to $X$. A disadvantage of working with the complete intersection cone, however, is the combinatorial complexity of $\ci(X)$, especially when the nef cone of $X$ has a large number of extremal rays. 
\begin{example}
\label{exCI:CIvsMovSurface}
Let $X$ be a smooth projective surface. Then one-cycles and divisors coincide, so $\pEff(X)^{\vee} = \Nef(X) = \ci(X)$, where the second equality follows from Kleiman's theorem, since by definition $\ci(X)$ is the closure of the ample cone. 
\end{example}
\begin{example}
\label{exCI:CIvsMovPn}
Let $X= \PP^n$, and let $H \subseteq \PP^n$ be a hyperplane, and let $\ell \subseteq \PP^n$ be a line. Then $N^1(\PP^n)_\RR = \langle [H]\rangle$ and $\pEff(\PP^n) = \Nef(\PP^n) = \langle [H] \rangle_{\geq 0}$, while $N_1(\PP^n)_\RR = \langle [\ell]\rangle$, and $\pEff(\PP^n)^\vee = \bmov(\PP^n) = \langle [\ell] \rangle_{\geq 0}=  \langle [H]^{n-1} \rangle_{\geq 0}$, hence $\ci(\PP^n) = \bmov(\PP^n)$.
\end{example}
Peternell has calculated an example of a smooth projective threefold for which the containment of the complete intersection cone in the movable cone is strict \cite{pMov}, but one can ask if there are natural families of varieties for which these cones coincide. Two obvious testing grounds are toric varieties and moduli spaces of stable pointed rational curves, since the intersection theory on these varieties is well-understood. A connection between these two families is the Kapranov blow-up construction. In \cite{MR1237834}, $\M_{0,n}$ is constructed by a series of toric blow-ups of $\PP^{n-3}$, culminating in the permutohedral or Losev-Manin moduli space $\LM_{n-2}$, followed by (for $n \geq 5$) additional blow-ups along non-torus-invariant centers. 

Example \ref{exCI:CIvsMovPn} can be taken as the base case of a progression of varieties obtained by successive Kapranov-like blow-ups. More specifically, setting $X_0 = \PP^3$, the next variety we take to be the blow-up of $\PP^3$ at a general point, labeling the resulting variety $X_1$. We define $X_2$ to be the blow-up of $\PP^3$ along two general points, and the proper transform of the line spanned by the points. In general, for $1 \leq r \leq  5$, we blow-up $r$ points of $\PP^3$ in general linear position, and then the proper transforms of the $\binom{r}{2}$ lines generated by the $r$ points. For $r \leq 4$, the centers of the blow-ups can be chosen to be torus-invariant. Then $X_4$ is the permutohedral space $\LM_4$, while $X_5$ is $\M_{0,6}$. The complete intersection and movable cones of the first few varieties $X_r$ can be computed easily to show that these cones coincide, but there is little reason to expect this equality of cones to be preserved under increasing blow-ups. 

The main result of this paper is the following: 
\begin{thm}
\label{thmCI:CIvsNef}
There is a strict inclusion $\ci(\M_{0,6}) \subsetneq \bmov(\M_{0,6})$, while for the toric varieties $X_r$, $1 \leq r \leq 4$, equality holds: $\ci(X_r) = \bmov(X_r)$. 
\end{thm}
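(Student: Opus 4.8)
The plan is to turn the statement into a finite convex-geometry computation and run it separately on each variety. Two structural facts drive the reduction. First, the inclusion $\ci(X) \subseteq \bmov(X)$ holds for every $X$: taking $\mu = \mathrm{id}$ in Definition \ref{defCI:movable} shows that a complete intersection of ample divisors is movable. Second, since every $X_r$ is a threefold, $\dim(X_r) - 1 = 2$, so $\ci(X_r)$ is the closed cone generated by products $A_1 \cdot A_2$ of two nef classes. Writing $\Nef(X_r) = \langle N_1, \ldots, N_k \rangle_{\geq 0}$ and expanding nef classes as nonnegative combinations of the $N_i$, bilinearity of the intersection product gives
\[
\ci(X_r) = \big\langle\, N_i \cdot N_j : 1 \leq i \leq j \leq k \,\big\rangle_{\geq 0} \subseteq N_1(X_r)_\RR,
\]
a cone with explicitly computable generators once the nef cone and the intersection form are known. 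Dualizing the inclusion $\ci(X_r) \subseteq \bmov(X_r) = \pEff(X_r)^\vee$ and using that $\pEff$ is closed, the equality $\ci(X_r) = \bmov(X_r)$ is equivalent to $\ci(X_r)^\vee = \pEff(X_r)$, while strictness is equivalent to the existence of a divisor class $D$ that is nonnegative on every complete intersection curve, i.e. $D \cdot (N_i \cdot N_j) \geq 0$ for all $i, j$, yet is not pseudoeffective.

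The first concrete task is to assemble the inputs for each $X_r$: a basis of $N^1(X_r)_\RR$ built from the pullback of the hyperplane class on $\PP^3$ together with the exceptional divisors of the point- and line-blow-ups, the intersection form in this basis, and the extremal generators $N_i$ of $\Nef(X_r)$. For the toric members $1 \leq r \leq 4$ all of this is read off from the fan, and by toricity $\pEff(X_r)$ is the cone on the torus-invariant prime divisors, so $\bmov(X_r) = \pEff(X_r)^\vee$ is the dual of an explicit polyhedral cone. For $X_5 = \M_{0,6}$ the two inputs are heavier: $N^1(\M_{0,6})_\RR$ is $16$-dimensional, and I would feed in the known description of $\Nef(\M_{0,6})$ (whose extremal rays are of $\fF$-type) and of $\pEff(\M_{0,6})$, the latter having extremal rays beyond the $25$ boundary divisors coming from the Keel--Vermeire classes.

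With the inputs fixed, the algorithm is a ray-by-ray test. I would compute all pairwise products $N_i \cdot N_j$, form $\ci(X_r)$, and for each extremal generator $v$ of $\bmov(X_r) = \pEff(X_r)^\vee$ decide by linear programming whether $v \in \ci(X_r)$. For $1 \leq r \leq 4$ I expect every generator $v$ to be a nonnegative combination of the $N_i \cdot N_j$; together with the automatic inclusion this yields $\ci(X_r) = \bmov(X_r)$. For $\M_{0,6}$ I would instead exhibit the separating witness identified above: a divisor class $D$ with $D \cdot (N_i \cdot N_j) \geq 0$ for all $i, j$ but $D \notin \pEff(\M_{0,6})$. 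Such a $D$ certifies $\ci(\M_{0,6})^\vee \supsetneq \pEff(\M_{0,6})$, hence the strict inclusion $\ci(\M_{0,6}) \subsetneq \bmov(\M_{0,6})$.

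The main obstacle is the combinatorial scale and reliability of the $\M_{0,6}$ computation. The argument is only as sound as the descriptions of $\Nef(\M_{0,6})$ and $\pEff(\M_{0,6})$ that feed it, and in $16$ dimensions both the enumeration of the products $N_i \cdot N_j$ and the final containment tests are large. The delicate point is to replace a floating-point LP certificate with an explicit, exactly verifiable witness $D$: one must write $D$ in the chosen basis, check the finitely many inequalities $D \cdot (N_i \cdot N_j) \geq 0$ symbolically, and prove $D \notin \pEff(\M_{0,6})$ by pairing it against a movable class on which it is negative. Making this witness explicit is what upgrades the computation to a proof.
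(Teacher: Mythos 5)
Your proposal is correct and takes essentially the same route as the paper: your identification of $\ci(X_r)$ with the cone generated by pairwise products of extremal nef classes is precisely the paper's Lemma \ref{lemmaCI:cinef2}, and your ray-by-ray test over the extremal rays of $\bmov(X_r) = \pEff(X_r)^\vee$, with the same inputs (extremal nef rays, pseudoeffective generators including the Keel--Vermeire classes for $\M_{0,6}$, and the intersection form), is exactly the paper's Corollary \ref{corCI:algorithm} and the computer algorithm built on it. The only cosmetic difference is your Farkas-dual certificate of strictness (a divisor class $D$ nonnegative on all generators of $\ci(\M_{0,6})$ but not pseudoeffective), which is the dual phrasing of the paper's direct output of extremal movable rays not contained in $\ci(\M_{0,6})$; both certificates are valid and equivalent.
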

\noindent In other words, the containment of these cones becomes strict when we leave the toric world in the Kapranov construction of $\M_{0,6}$.

We prove this theorem by reinterpreting the complete intersection cone in combinatorial terms (see Definition \ref{defCI:Nefnm1} and Lemma \ref{lemmaCI:cinef2}). Since the nef and pseudoeffective cones of $\M_{0,6}$ and $\LM_4$ are finitely generated, it follows by this reinterpretation that equality of the moving and complete intersection cones can be tested by an algorithm that requires as input the extremal rays of the nef and effective cones of divisors, plus intersection products of divisors (see Section \ref{secCI:ci}). 

That the complete intersection and movable cones coincide for the toric blow-ups of Theorem \ref{thmCI:CIvsNef} might give hope that these cones coincide for smooth projective toric varieties. It turns out, however, that even for a toric blow-up of projective space the complete intersection cone need not equal the movable cone. In Example \ref{exCI:toricthreefold}, we produce such a toric variety.

The remainder of this paper is organized as follows. We begin with some generalities on the pseudoeffective and nef cones of divisors, as well as the the closed cones of curves, for $\M_{0,n}$ and the other blow-ups $X_r$ in Section \ref{secCI:background}. In Section \ref{secCI:ci}, we establish a combinatorial definition of the complete intersection cone, and describe the algorithm used to prove Theorem \ref{thmCI:CIvsNef}. We also show that extremal movable curve classes of the toric variety $\LM_{n-2}$ pull back to extremal classes in $\M_{0,n}$. Section \ref{secCI:intTh} contains proofs for intersection calculations used for our algorithm.

\textbf{Acknowledgments}: I would first like to thank Gavril Farkas for suggesting this problem, and for his help throughout. This project has benefitted greatly from conversations with Nathan Ilten, Sam Payne, and Thomas Peternell. I would also like to thank Klaus Altmann and Angela Gibney for comments on an earlier version of this paper.
%%%%%%%%%%%%%%%%%%%%%%%%%%%%%%%%%%%%%%%%%%%
%          Definitions and background                                  		%
%%%%%%%%%%%%%%%%%%%%%%%%%%%%%%%%%%%%%%%%%%%
\section{Definitions and background}
\label{secCI:background}
In this section we give definitions from intersection theory on a complex projective variety before focusing on the particular examples of toric varieties and the moduli space of stable pointed rational curve, $\M_{0,n}$. We refer to \cite{MR2095471}, \cite{MR1644323}, and Appendix A of \cite{MR0463157} for the basics of intersection theory, and \cite{MR1034665} and \cite{larsenThesis} for background and examples involving $\M_{0,n}$.

Let $X$ be a smooth complex projective variety, with $\mathrm{Div}_{\RR}(X)$ denoting the space of $\RR$-linear formal sums of algebraic hypersurfaces on $X$ (called \emph{$\RR$-divisors} on $X$). There is a well-defined intersection pairing between $\RR$-divisors and $\RR$-linear formal sums of algebraic curves on $X$ (called \emph{one-cycles}), which we denote by ``$\cdot$''.
\begin{defn}
\label{def:NS}
Two divisors $D_1, D_2 \in \mathrm{Div}_{\RR}(X)$ are said to be \emph{numerically equivalent} if for all algebraic curves $C \subseteq X$, $D_1 \cdot C = D_2 \cdot C$.
\end{defn}
\noindent We thus obtain an equivalence relation on $\mathrm{Div}_{\RR}(X)$, and denote the numerical equivalence class of a divisor $D$ by $[D]$.

\begin{defn}
The \emph{N\'eron-Severi} space of $X$ is defined as
\begin{equation*}
N^1(X)_{\RR} = \{ [D] : D \in \mathrm{Div}_{\RR}(X) \}, 
\end{equation*}
and the dual vector space induced by the intersection product is denoted $N_1(X)_\RR$.
\end{defn}

\noindent We will also denote the numerical class of a one-cycle $C$ as $[C] \in N_1(X)_\RR$. A key fact for what follows is that $N^1(X)_\RR$ and $N_1(X)_\RR$ are finite-dimensional $\RR$-vector spaces. 

\begin{defn}
\label{def:effDiv}
The \emph{pseudoeffective cone} of divisors, written $\pEff(X)$, is the closed subcone of $N^1(X)_\RR$ generated by classes of effective divisors. Explicitly, $\pEff(X)$ is the closure in $N^1(X)_\RR$ of
\begin{equation*}
\mathrm{Eff}(X) = \big\{ \sum d_i [D_i]: d_i \geq 0, D_i \textrm{ an effective divisor on }X \big\}.
\end{equation*}
\end{defn}

\noindent The analogous cone in $N_1(X)_\RR$ is called the \emph{closed} (or \emph{Mori}) \emph{cone of  curves}:
 \begin{defn}
Define $\mori(X)$ as the closed subcone of $N_1(X)_\RR$ generated by classes of algebraic curves, that is, the closure in $N_1(X)_\RR$ of
\begin{equation*}
\mathrm{NE}(X) = \big\{ \sum c_i [C_i]: c_i \geq 0, C_i \subseteq X \textrm{ an algebraic curve} \big\}.
\end{equation*}
\end{defn}

\begin{defn}
The cone of \emph{nef divisors} on $X$ is
\begin{equation}
\Nef(X) = \{[D] \in N^1(X)_\RR: D \cdot C \geq 0 \textrm{ for all }[C] \in \mori(X)\} = (\mori(X))^{\vee}.\nonumber
\end{equation}
\end{defn}

We will sometimes reduce intersection properties on $\M_{0,n}$ to intersections on a more amenable variety via the \emph{projection formula}. We use this result for numerical equivalence classes on non-singular varieties, where the formula takes on a particularly simple form (see \cite{MR1644323}, Proposition 8(c), noting that rational equivalence is finer than numerical equivalence). We only give the formula for divisors and one-cycles, though the statement holds for all pairs of complementary dimensional subvarieties.
\begin{lemma}[Projection formula]
\label{lemmaI:projection}
 Let $f: X \to Y$ be a proper morphism of non-singular varieties. For $\delta \in N^1(X)_\RR$ and $\gamma \in N_1(Y)_\RR$,
\begin{equation*}
f_*(\delta \cdot f^*\gamma) = f_*(\delta) \cdot \gamma.
\end{equation*}
\end{lemma}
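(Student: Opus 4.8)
The plan is to prove the identity first at the level of cycle classes modulo rational equivalence, where it is classical, and then to descend to numerical classes. Both sides are $\RR$-bilinear in $\delta$ and $\gamma$, so I would reduce at once to the case $\delta = [D]$ for a prime divisor $D \subseteq X$ and $\gamma = [C]$ for an irreducible curve $C \subseteq Y$. Since $Y$ is non-singular I may regard the class of $C$ as an operational (bivariant) class and $f^*\gamma$ as its pullback, so that the assertion becomes the instance
\begin{equation*}
f_*\big([D] \cdot f^*[C]\big) = f_*[D] \cdot [C]
\end{equation*}
of the projection formula $f_*(\alpha \cdot f^*c) = f_*\alpha \cdot c$ for a cycle $\alpha$ on $X$ and an operational class $c$ on $Y$.

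To prove this Chow-level statement I would appeal to the foundational compatibility of proper pushforward with the pullback of operational classes; its verification reduces to the building-block case of capping by a single Cartier divisor $E$ on $Y$, namely $f_*(f^*E \cap [D]) = E \cap f_*[D]$. Writing $W = f(D)$, I would then analyse $f|_D \colon D \to W$. If $f$ contracts $D$, i.e. $\dim W < \dim D$, then $f_*[D] = 0$ and the right-hand side vanishes; the left-hand side vanishes as well, the essential input being the computation of the proper pushforward of a principal divisor, which is zero precisely when the dimension drops (\cite{MR1644323}). If instead $f|_D$ is generically finite of degree $d$ onto $W$, then $f_*[D] = d\,[W]$, and the same pushforward computation, now in its equidimensional form via the norm of the function-field extension $\RR(D)/\RR(W)$, yields the claimed equality. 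This is the content of the cited \cite{MR1644323}, Proposition 8(c).

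Finally I would transfer the identity to numerical classes. The intersection pairing between divisors and one-cycles is by construction well-defined on numerical equivalence classes, and numerical equivalence is coarser than rational equivalence; hence the equality just established in the Chow groups descends verbatim to $N^1(X)_\RR$ and $N_1(Y)_\RR$. Non-singularity of $X$ and $Y$ is what guarantees that $f^*$ and all the products appearing are defined at this level.

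I expect the main obstacle to be the contraction case: the vanishing of the left-hand side when $D$ is collapsed is not a mere dimension count once the intersection cycle has dropped to low dimension, but genuinely depends on the norm/pushforward computation for principal divisors. The generically finite case, by contrast, reduces cleanly to that same computation in the equidimensional setting.
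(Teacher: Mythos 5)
Your proposal is correct and follows essentially the same route as the paper: the paper's proof of this lemma consists entirely of citing the Chow-level projection formula for proper morphisms of non-singular varieties (\cite{MR1644323}, Proposition 8(c)) and then descending to numerical classes on the grounds that rational equivalence is finer than numerical equivalence. You do exactly this, except that you additionally unpack the proof of the cited Chow-level statement itself (reduction to prime cycles, the building-block case of capping with a Cartier divisor, the contracted versus generically finite dichotomy, and the norm computation for pushforwards of principal divisors), which is precisely Fulton's own argument for that proposition.
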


We next give a short description of the moduli space $\M_{0,n}$. Set theoretically, it is defined as follows:

\begin{defn}
For $n \in \NN$, $n \geq 3$, the elements of $\M_{0,n}$ are equivalence classes of
\begin{equation*}
\{ (C, p_1, \ldots, p_n): C \textrm{ a tree of } \PP^1s, p_i \in C \textrm{ distinct} \}
\end{equation*}
such that
\begin{enumerate}
\item all marked points $p_i$ are distinct from the nodes of $C$,
\item each irreducible component $C$ contains at least three marked or singular points,
\item two marked curves $(C, p_1, \ldots, p_n)$ and $(C', q_1, \ldots, q_n)$ are equivalent if there is an isomorphism $\phi: C \to C'$ with $\phi(p_i) = q_i$ for all $i$.
\end{enumerate}
\end{defn}
The resulting moduli space is a smooth projective variety \cite{MR702953}, and can be realized as a sequence of blow-ups of $\PP^{n-3}$ along linear centers \cite{MR1237834}. This blow-up construction will feature heavily in the remainder, and although it corresponds to that of \cite{MR1237834}, the ordering of the blow-ups differs slightly from that given in subsequent literature. First pick $n-2$ general points in $\PP^{n-3}$; via a projective transformation, we may choose $x_1 = [1, 0, \ldots, 0]$, $\ldots$, $x_{n-2} = [0, \ldots, 0, 1] \in \PP^{n-3}$. Note that these points are invariant under the action of the torus $(\CC^*)^{n-3}$. First blow up $\PP^{n-3}$ iteratively along $x_1, \ldots, x_{n-2}$, then along the proper transforms of the lines spanned by pairs of $x_1, \ldots, x_{n-2}$, and continue blowing up proper transforms of linear subspaces spanned by these points until all codimension two subspaces have been blown up. Since all blow-up centers were torus invariant, the result is a smooth projective toric variety.

\begin{defn}
The variety obtained from the above blow-ups of $\PP^{n-3}$ is the \emph{permutohedral} or \emph{Losev-Manin moduli space} $\LM_{n-2}$.
\end{defn}

\noindent The name \emph{permutohedral space} is given in \cite{MR1237834} since the corresponding polytope in a permutahedron, while the second variant reflects the modular interpretation given in \cite{MR1786500}. More on these varieties can be found in \cite{blume1} and \cite{pllCoxRelns}.

To obtain $\M_{0,n}$ from $\LM_{n-2}$ we first blow up the point $x_{n-1} = [1, \ldots, 1]$, and then---in order of increasing dimension---all remaining proper transforms of linear subspaces spanned by $x_1, \ldots, x_{n-1}$. These blow-up constructions also give natural bases for $N_1(\LM_{n-2})_\RR$ and $N_1(\M_{0,n})_\RR$, which we will call the \emph{Kapranov basis}: 
\begin{itemize}
\item For $\LM_{n-2}$, denote by $[H]$ the pull-back of the hyperplane class on $\PP^{n-3}$, and by $[E_J]$ with $J \subseteq \{1, \ldots, n-2\}$, $1 \leq |J| \leq n-4$ the proper transforms of the exceptional divisors resulting from blowing up a linear subspace spanned by the collection of $\{x_1, \ldots, x_{n-2}\}$ indexed by $J$.

\item For $\M_{0,n}$, we abuse notation by again writing $[H]$ for the pull-back of the hyperplane class, and as above $[E_J]$ for the proper transforms of exceptional divisors, where now $J \subseteq \{1, \ldots, n-1\}$.
\end{itemize}

We will be especially concerned with two collections of divisor and curve classes on $\M_{0,n}$, namely those of \emph{boundary divisors} and \emph{F-curves}.
\begin{defn}
For $J  \subseteq \{1, \ldots,n \}$ with $2 \leq |J| \leq n-2$, the \emph{boundary divisor} $\D_J$ is the locus of elements in $\M_{0,n}$ whose underlying curve can be decomposed into two components $C = C' \cup C''$ such that the marked points on $C'$ are indexed by $J$ and those of $C''$ are indexed by $J^c$.
\end{defn}
\noindent We will tacitly identify $\D_J$ and $\D_{J^c}$. The boundary divisors form the codimension one constituents of a stratification of $\M_{0,n}$ by dual-graph. The dimension one elements of this stratification are known as \emph{F-curves}. The numerical equivalence class of an F-curve is uniquely determine by a partition of $\{1, \ldots, n\}$ into four subsets, $\{\mu_1, \mu_2, \mu_3, \mu_4\}$. 
To go from F-curves to such partitions, let $(C, p_1, \ldots, p_n) \in \M_{0,n}$ be a generic element of an F-curve. Then $C$ can be decomposed as $C = C_{spine} \cup C_1 \cup C_2 \cup C_3 \cup C_4$, where $C_{spine}$ is a $\PP^1$ with four special points, and the marked points indexed by $\mu_i$ are located on $C_i$ for all $i$. That this partition uniquely determines the numerical class of an F-curve results from the following intersection pairings, proved in \cite{km}.
\begin{prop}
\label{propI:FcurvePartition}
Let $F_\mu$ be a one stratum, with corresponding partition $\mu = (\mu_1, \mu_2, \mu_3, \mu_4)$. For any boundary divisor $\Delta_J$,
\begin{displaymath}
F_\mu \cdot \Delta_J = 
\left\{ \begin{array}{rl}
-1 & \text{if $J$ or $J^c$ equals $\mu_i$ for some $i$} \\
1 & \text{if $J = \mu_i \cup \mu_j$ for some $i \neq j$},\\
0 & \text{otherwise}.
\end{array} \right.
\end{displaymath}

\end{prop}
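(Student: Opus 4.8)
The plan is to realize the F-curve $F_\mu$ as the image of an explicit embedding of $\M_{0,4} \cong \PP^1$ and to reduce every intersection number $F_\mu \cdot \Delta_J$ to a degree computation on $\PP^1$. Concretely, I would fix once and for all a stable tail carrying the marked points indexed by each $\mu_i$ (a point attached directly to the spine if $|\mu_i| = 1$, and a genuine rational tail with a marked attaching node if $|\mu_i| \geq 2$), and let $\iota : \M_{0,4} \to \M_{0,n}$ send a four-pointed spine to the stable curve obtained by gluing the four fixed tails to the four marked points of the spine. This is a closed embedding whose image is $F_\mu$, so by the projection formula (Lemma \ref{lemmaI:projection}) one has $F_\mu \cdot \Delta_J = \deg_{\M_{0,4}} \iota^*\cO(\Delta_J)$, and it remains to identify $\iota^{-1}(\Delta_J)$ and compute its degree.

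Next I would carry out the case analysis governing the support of $\iota^{-1}(\Delta_J)$. A curve in the family lies in $\Delta_J$ precisely when one of its nodes separates the markings into $J$ and $J^c$; since the four tails are rigid, the only marking sets that can be split off are unions of the parts $\mu_i$. If $J$ (equivalently $J^c$) splits some single part $\mu_i$ nontrivially, no member of the family degenerates accordingly, so $\iota^{-1}(\Delta_J) = \emptyset$ and $F_\mu \cdot \Delta_J = 0$; this is the ``otherwise'' case. If $J = \mu_i \cup \mu_j$ for $i \neq j$, the generic spine does not produce this separation, but the single boundary point $\delta_{ij} \in \M_{0,4}$---where the attaching points indexed $i$ and $j$ collide onto a new spine component---does, and the family crosses $\Delta_J$ transversally there with multiplicity one; hence $\iota^*\cO(\Delta_J)$ is a single reduced point and $F_\mu \cdot \Delta_J = 1$. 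Finally, if $J = \mu_i$ (or $J^c = \mu_i$), then every member of the family already carries the node separating $\mu_i$, so $F_\mu \subseteq \Delta_{\mu_i}$ and the intersection number is the degree of the normal direction of $\Delta_{\mu_i}$ along $F_\mu$.

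The one genuinely delicate case is the last, $J = \mu_i$, where the curve is contained in the divisor and the answer $-1$ is a self-intersection rather than a count of transverse points. Here I would invoke the product structure $\Delta_{\mu_i} \cong \M_{0,\mu_i \cup \{*\}} \times \M_{0,\mu_i^c \cup \{\star\}}$ together with the standard formula $\cO(\Delta_{\mu_i})|_{\Delta_{\mu_i}} = -\psi_* - \psi_\star$ for the normal bundle, where $\psi_*$ and $\psi_\star$ are the cotangent classes at the two branches of the glued node. Under $\iota$, the first factor (the tail carrying $\mu_i$) is constant while the spine varies, so $\iota^*\psi_*$ has degree $0$; the image in the second factor is again an F-curve on which $\star$ is a marked point sitting on the varying spine, so $\iota^*\psi_\star$ is the degree-one $\psi$-class on $\M_{0,4} \cong \PP^1$. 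This yields $F_\mu \cdot \Delta_{\mu_i} = -0 - 1 = -1$, completing the proof. I expect this normal-bundle computation to be the main obstacle, since the other two cases reduce cleanly to transversality and to the combinatorics of which spine degenerations separate the marking set.
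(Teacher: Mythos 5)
Your overall strategy---realize the F-curve as the image of a gluing map from $\M_{0,4}\cong\PP^1$, convert $F_\mu\cdot\Delta_J$ into $\deg_{\PP^1}\iota^*\cO(\Delta_J)$ via the projection formula, and handle the contained case $J=\mu_i$ with the normal-bundle formula $c_1\big(\cO(\Delta_{\mu_i})|_{\Delta_{\mu_i}}\big)=-\psi_*-\psi_\star$---is the standard route and is sound; in particular the $-1$ computation (the rigid tail kills $\psi_*$, the varying spine gives $\deg\iota^*\psi_\star=1$) is correct. For comparison: the paper offers no proof of this proposition at all, it simply cites \cite{km}, so your argument is being judged on its own merits rather than against an argument in the text.

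There is, however, a genuine gap in your middle case analysis. You assert that, since the four tails are rigid, ``the only marking sets that can be split off are unions of the parts $\mu_i$,'' and conclude $\iota^{-1}(\Delta_J)=\emptyset$ in the ``otherwise'' case. That claim is only true if the tails are irreducible. But the proposition concerns a one-dimensional \emph{stratum} of the dual-graph stratification: in such a stratum every component other than the spine has exactly three special points, so whenever $|\mu_i|\geq 3$ the tail carrying $\mu_i$ is forced to be a nodal tree---rigid, yes, but with interior nodes. Each interior node splits off a proper subset $S\subsetneq\mu_i$ with $|S|\geq 2$, so in fact $F_\mu\subseteq\Delta_S$, and $F_\mu\cdot\Delta_S$ is not an empty intersection but another ``contained curve'' case. (It still comes out to $0$ by your own normal-bundle argument: both branches of that interior node lie on rigid components, so both pulled-back $\psi$-classes have degree $0$.) This is not an edge case you can ignore: it already occurs for the paper's main space $\M_{0,6}$, where partitions of type $(3,1,1,1)$ arise. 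Alternatively, you may keep your smooth tails, but then $\iota(\M_{0,4})$ is \emph{not} the stratum $F_\mu$ when some $|\mu_i|\geq 3$, and you need the missing bridge: letting the tails vary in the connected parameter space $\prod_i \M_{0,\mu_i\cup\{*_i\}}$ places your curve and the stratum in a single algebraic family, and algebraic equivalence is finer than numerical equivalence, so their intersection numbers agree. Either repair is short, but as written the proof either establishes the statement for the wrong curve or relies on a false intermediate claim about which $\Delta_J$ meet the family.
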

\noindent Since classes of boundary divisors generate $N^1(\M_{0,n})_\RR$, these intersection numbers uniquely determine the class of $F$.

We will require one final fact about intersection theory on $\M_{0,n}$ relating boundary divisors and elements of the Kapranov basis:
\begin{equation}\label{eq:dictionary}
\begin{split}
\Delta_{J \cup \{n\}} &= E_J, \textrm { if } 1 \leq |J| \leq n-4, \\
\big[\Delta_{J \cup \{n\}}\big] & = [H] - \bigg(\sum_{J' \subsetneq J}[E_{J'}] \bigg), \textrm{ if } |J| = n-3.
\end{split}
\end{equation}

Except for small values of $n$, little is known about the pseudoeffective cone of divisors or the closed cone of curves for $\M_{0,n}$. The pseudoeffective cone of $\M_{0,n}$ (and hence, by duality, the movable cone of curve classes) is known to be finitely generated only for $n \leq 6$ \cite{MR1941624,MR2491903}, while finite-generation of the closed cone of curve classes of $\M_{0,n}$ (and hence, by duality, the cone of nef divisor classes) has been proven for $n \leq 7$ \cite{km, Larsen16112011}. 

For $\M_{0,6}$, the closed cone of curve classes is generated by classes of F-curves, while the pseudoeffective cone of $\M_{0,6}$ is generated by the boundary divisors $\Delta_J$, and the \emph{Keel-Vermeire} divisors \cite{MR1882122}. In the Kapranov blow-up description of $\M_{0,6}$, Keel-Vermeire divisors are the pull-backs under the blow-up morphism $t_6: \M_{0,6} \to \PP^3$ of the unique quadric surface containing points $p_1$, $\ldots$, $p_5$, and the lines $l_{ac}$, $l_{ad}$, $l_{bc}$ and $l_{bd}$. Taking $(a,b,c,d) = (1,2,3,4)$, the Keel-Vermeire divisor $Q_{(12)(34)(56)}$ has numerical class
\begin{equation}
\label{eqCI:KV}
[Q_{(12)(34)(56)}] = 2[H] - \sum_{i=1}^5[E_i] - [E_{13}] - [E_{14}] - [E_{23}] - [E_{24}].
\end{equation}
\noindent The remaining fourteen Keel-Vermeire divisors arise by varying the indexing product of two cycles (here meant in terms of symmetric groups, not algebraic cycles).

For toric varieties, each of the cones described above is finitely generated, and admits an explicit description (sometimes more than one) in combinatorial terms. The starting point for understanding the various cones of a toric variety $X_\Sigma$ of dimension $d$ is the Orbit-Cone correspondence (see  for example \cite{MR2810322}, §3.2 and §6.3). Recalling that $\Sigma(k)$ denotes the $k$-dimensional cones of the fan $\Sigma$, and $V(\s)$ is the codimension $k$ subvariety corresponding to $\s \in \Sigma(k)$, we have the following descriptions of $\pEff(X_\Sigma)$ and $\mori(X_\Sigma)$:
\begin{prop} For a complete toric variety $X_\Sigma$,
\label{propCI:pseffToric}
\begin{equation}
\pEff(X_\Sigma) = \langle [V(\rho)]: \rho \in \Sigma(1) \rangle_{\geq 0}, \nonumber
\end{equation}
while
\begin{equation}
\mori(X_\Sigma) = \langle [V(\tau)]: \tau \in \Sigma(d-1) \rangle_{\geq 0}. \nonumber
\end{equation}
\end{prop}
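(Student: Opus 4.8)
The plan is to prove both equalities using the toric dictionary supplied by the Orbit-Cone correspondence, deducing the effective-cone statement from the structure of the homogeneous coordinate (Cox) ring and the curve-cone statement by duality. Throughout I write $D_\rho = V(\rho)$ for the torus-invariant prime divisor attached to a ray $\rho \in \Sigma(1)$, and recall that each $\tau \in \Sigma(d-1)$ yields a complete torus-invariant curve $V(\tau)$.

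For the first equality, the inclusion $\langle [V(\rho)] : \rho \in \Sigma(1) \rangle_{\geq 0} \subseteq \pEff(X_\Sigma)$ is immediate, since each $D_\rho$ is an honest effective divisor. For the reverse inclusion I would use that the class group of a complete toric variety is generated by the classes $[D_\rho]$, together with the stronger statement that every effective divisor class admits a torus-invariant effective representative. The cleanest route is via the Cox ring $\mathrm{Cox}(X_\Sigma) = \CC[x_\rho : \rho \in \Sigma(1)]$, a polynomial ring graded by the class group in which the variable $x_\rho$ has degree $[D_\rho]$: a class is effective precisely when its graded piece is nonzero, and a nonzero graded piece is spanned by monomials, each of which exhibits the class as a nonnegative integer combination of the $[D_\rho]$. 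Hence $\mathrm{Eff}(X_\Sigma) \subseteq \langle [V(\rho)] \rangle_{\geq 0}$; as the latter cone is finitely generated, hence closed, taking closures gives $\pEff(X_\Sigma) = \langle [V(\rho)] \rangle_{\geq 0}$.

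For the second equality I would argue dually. Again $\langle [V(\tau)] : \tau \in \Sigma(d-1) \rangle_{\geq 0} \subseteq \mori(X_\Sigma)$ is clear. For the reverse, the key input is the toric form of Kleiman's criterion: on a complete toric variety the nef cone is cut out by the finitely many invariant curves, so that $\Nef(X_\Sigma) = \{[D] \in N^1(X_\Sigma)_\RR : D \cdot V(\tau) \geq 0 \text{ for all } \tau \in \Sigma(d-1)\}$. Dualizing this description of $\Nef(X_\Sigma)$ and invoking $\mori(X_\Sigma) = \Nef(X_\Sigma)^\vee$ (the defining relation) together with biduality for the finitely generated cone $\langle [V(\tau)] \rangle_{\geq 0}$ then yields $\mori(X_\Sigma) = \langle [V(\tau)] \rangle_{\geq 0}$. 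Alternatively one can argue without duality by degenerating an arbitrary irreducible curve to a torus-invariant one-cycle in the same numerical class, using a one-parameter subgroup to flow the curve toward the torus-fixed locus and taking a limit cycle; this directly exhibits every class in $\mathrm{NE}(X_\Sigma)$ as a nonnegative combination of the $[V(\tau)]$.

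Both statements are standard (see \cite{MR2810322}, \S6.3--\S6.4), and the effective-cone half is essentially formal once the Cox ring is in hand. I expect the main technical content to lie in the curve-cone statement, specifically in justifying that the nef cone is detected by the invariant curves $V(\tau)$ (equivalently, that every curve class degenerates to an invariant one). This is the toric analogue of the hard inclusion in Kleiman's theorem, and is where the completeness of $X_\Sigma$ and the combinatorics of the wall cones $\tau \in \Sigma(d-1)$---each wall separating two maximal cones and thereby encoding an intersection-theoretic relation---are genuinely used.
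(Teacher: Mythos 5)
Your proof is correct, but note that the paper never proves this proposition at all: it is quoted as a standard fact, with a pointer to \cite{MR2810322}, \S 3.2 and \S 6.3. The two arguments you give --- monomials in the class-group-graded Cox ring for the pseudoeffective cone, and the toric Kleiman criterion (nefness is detected by the wall curves $V(\tau)$, proved via convexity of support functions) combined with biduality of closed cones for the Mori cone --- are precisely the proofs given in that reference, so you have in effect reconstructed the standard argument the paper relies on.
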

\noindent The duals of these cones, $\bmov(X_\Sigma)$ and $\Nef(X_\Sigma)$, can be calculated by the combinatorics of the defining fans. Example calculations appear in Example \ref{exCI:toricthreefold}.
%%%%%%%%%%%%%%%%%%%%%%%%%%%%%%%%%%%%%%%%%%%
%                             Complete Intersection and Nef Curves                                 		%
%%%%%%%%%%%%%%%%%%%%%%%%%%%%%%%%%%%%%%%%%%%
\section{Comparing complete intersection and movable curve classes}
\label{secCI:ci}
The proof of Theorem \ref{thmCI:CIvsNef} involves comparing
intersections of pairs of nef divisor classes with movable classes. To begin this section, we give defining inequalities for the nef
and movable cones, and calculate intersections of pairs of divisors on
the varieties $X_r$ (defined below), thus providing the input data for
the algorithmic proof of Theorem \ref{thmCI:CIvsNef}. Proofs are given in Section \ref{secCI:intTh}.

\begin{defn}
\label{defnCI:kapBases}
Let $X_r$ be the composition of the blow-ups of $r$ general points in
$\PP^3$, $1 \leq r \leq 5$, followed by the blow-ups of the
proper-transforms of the $\binom{r}{2}$ lines of $\PP^3$ spanned by
the $r$ points.
\end{defn}
Note that $X_4 = \LM_4$ and $X_5 = \M_{0,6}$. Let $H$ be the pullback of a general hyperplane, let $E_1, \ldots, E_r$ be the exceptional divisors obtained by blowing up the points, and let $E_{12}, \ldots, E_{r-1 \, r}$ be the proper transforms of the exceptional divisors obtained by blowing up the lines.

As in Section \ref{secCI:background}, the \emph{Kapranov basis} of $N^1(X_r)_\RR$ is
\begin{equation*}
\{[H], [E_1], \ldots, [E_r], [E_{12}], \ldots, [E_{r-1 \, r}] \},
\end{equation*}
and the \emph{dual Kapranov basis} of $N_1(X_r)_\RR$ is denoted
\begin{equation*}
\{[H]^{\vee}, [E_1]^{\vee}, \ldots, [E_r]^{\vee}, [E_{12}]^{\vee}, \ldots, [E_{r-1 \, r}]^{\vee} \}.
\end{equation*}

\noindent Again, we will abuse notation and not distinguish notationally
among the Kapranov bases from the different $X_r$. 

To characterize the nef cones of the $X_r$, we adopt the notational
convention for the defining inequalities that a coefficient $d_{ij}$ is set to zero if the indices
are impossible for a given inequality. For example, if $r=1$, then all
$d_{ij}$ appearing below are taken to be 0, and if $r=2$, the final inequality below reads $d_h + d_i \geq 0$ for $i \in \{1,2\}$. We also identify
$d_{ij}$ and $d_{ji}$.
\begin{prop}
\label{lemmaCI:nefCone} 
Let $[D] = d_h [H] + \sum_{i=1}^r d_i [E_i] + \sum_{1 \leq j<k \leq r} d_{jk}[E_{jk}]$ be an arbitrary divisor class in $X_r$. The cone of nef divisors is determined by the inequalities

\begin{equation*}
 \begin{cases}
 -d_{ij} \geq 0, &\text{for } 1 \leq i < j \leq r,\\
 d_h + d_i + d_j - d_{ij} \geq 0, &\text{for } 1 \leq i <  j \leq r,\\
 -d_i + d_{ij} + d_{ik} \geq 0, &\text{for } 1 \leq i,j,k \leq r, i \notin\{j,k\}, \\
 d_h + d_i + d_{jk} + d_{lm} \geq 0, &\text{for } \{i,j,k,l,m\} = \{1, \ldots, r\}.
 \end{cases}
\end{equation*}

\end{prop}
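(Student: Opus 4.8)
The plan is to use the defining duality $\Nef(X_r) = \mori(X_r)^{\vee}$ and reduce the problem to exhibiting the extremal generators of the Mori cone. Once a generating set $\{C\}$ of $\mori(X_r)$ is in hand, a class $[D]$ is nef if and only if $D \cdot C \geq 0$ for every generator, so each displayed inequality should arise as $D \cdot C \geq 0$ for one explicit effective curve $C$, and the four families should correspond to four geometric types of extremal curves. Writing $[D] = d_h[H] + \sum_i d_i[E_i] + \sum_{j<k} d_{jk}[E_{jk}]$, I would match the families as follows, deferring the intersection numbers to Section \ref{secCI:intTh}: the inequality $-d_{ij} \geq 0$ comes from a fiber $f_{ij}$ of the ruled exceptional divisor $E_{ij} \to \ell_{ij}$; the inequality $d_h + d_i + d_j - d_{ij} \geq 0$ comes from a section of that ruling (pushing forward to the line $\ell_{ij}$); the inequality $-d_i + d_{ij} + d_{ik} \geq 0$ comes from a line inside the exceptional surface $E_i \cong \PP^2$ through the two points at which $\ell_{ij}$ and $\ell_{ik}$ meet $E_i$; and, for $r = 5$ only, the inequality $d_h + d_i + d_{jk} + d_{lm} \geq 0$ comes from the strict transform of the unique line through $x_i$ meeting both $\ell_{jk}$ and $\ell_{lm}$. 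The index constraint $\{i,j,k,l,m\} = \{1,\ldots,r\}$ makes the last family vacuous precisely when $r \leq 4$, which is why it is invisible in the toric range.

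The necessity of the inequalities is then immediate: each such curve is effective, so $D \cdot C \geq 0$ holds for every nef $[D]$, and computing the pairing (via the projection formula, Lemma \ref{lemmaI:projection}, together with the blow-up intersection numbers of Section \ref{secCI:intTh}) yields exactly the stated linear form. The substance of the proof is the reverse inclusion, namely that the four families of curves just described already generate all of $\mori(X_r)$, so that no further inequalities are needed. I would handle this in two regimes. For $1 \leq r \leq 4$ the variety $X_r$ is the smooth projective toric variety obtained by star-subdividing the fan of $\PP^3$ at the maximal cones corresponding to the torus-fixed points $x_i$ and then at the two-dimensional cones corresponding to the invariant lines $\ell_{ij}$; by the Orbit--Cone description of $\mori(X_\Sigma)$ in Proposition \ref{propCI:pseffToric}, the extremal curve classes are exactly the torus-invariant curves $V(\tau)$ with $\tau \in \Sigma(2)$, and a direct reading of the subdivided fan shows these are precisely the first three families. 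For $r = 5$ we have $X_5 = \M_{0,6}$, whose Mori cone is generated by the F-curves; translating the F-curve classes into the Kapranov basis through the dictionary \eqref{eq:dictionary} and the intersection table of Proposition \ref{propI:FcurvePartition} shows that the F-curves are distributed among exactly these four families, the fourth appearing only now because a partition of $\{1,\ldots,6\}$ into four parts can separate the three blown-up loci $x_i$, $\ell_{jk}$, and $\ell_{lm}$.

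The main obstacle I anticipate is this completeness step: verifying that the enumerated curves exhaust the extremal rays of $\mori(X_r)$ and that dualizing produces no redundant or missing facets. In the toric range this is a finite but genuinely combinatorial check on the wall structure of the subdivided fan, and for $\M_{0,6}$ it requires carefully bookkeeping the F-curve-to-Kapranov translation so that each of the four displayed inequalities is seen to be hit and none is over-counted. Once the generating set is pinned down, the duality $\Nef(X_r) = \mori(X_r)^{\vee}$ closes the argument.
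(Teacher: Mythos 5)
Your strategy coincides with the paper's: both prove the statement by dualizing against a known generating set of $\mori(X_r)$ --- the F-curves for $r=5$ (translated into the Kapranov basis via Proposition \ref{propI:FcurvePartition} and the dictionary (\ref{eq:dictionary})), and the torus-invariant curves of Proposition \ref{propCI:pseffToric} for $r \leq 4$ (the paper actually writes out only $r=5$ and cites the author's thesis for the toric cases). Your geometric matching of curves to inequalities --- fibers and sections of the ruling $E_{ij} \to \ell_{ij}$, proper transforms of lines in $E_i$ through the points cut out by $\ell_{ij}$ and $\ell_{ik}$, and for $r=5$ the line through $x_i$ meeting $\ell_{jk}$ and $\ell_{lm}$ --- is correct and produces exactly the stated linear forms.

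The genuine problem is your claim that the fourth family is ``vacuous precisely when $r \leq 4$.'' The paper's convention, stated immediately before the proposition, is that coefficients with impossible indices are set to zero, so the fourth family survives in degenerate form; for instance, for $r=2$ it reads $d_h + d_i \geq 0$. This matters. For $r=1$ the first three families reduce to the single inequality $-d_1 \geq 0$, which does not cut out $\Nef(X_1)$: the class $-[H]$ satisfies it but is not nef. The missing facet is precisely the degenerate fourth inequality $d_h + d_1 \geq 0$, dual to the strict transform of a line through the blown-up point --- a torus-invariant curve that your own fan computation would produce, contradicting your assertion that the invariant curves yield ``precisely the first three families.'' The same phenomenon occurs for $r=2,3$: two-cones corresponding to strict transforms of invariant lines through exactly one blown-up point (or through none) give inequalities such as $d_h + d_i \geq 0$ and $d_h \geq 0$ that do not belong to the first three families; there they happen to be consequences of them (e.g. $d_h + d_1 = (d_h+d_1+d_2-d_{12}) + (-d_2+d_{12}) \geq 0$), but that redundancy is an additional verification which your proposal asserts away rather than performs. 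To repair the argument you must either adopt the paper's degenerate-index convention and check that the leftover toric inequalities are exactly the degenerate fourth family, or prove the redundancy directly for $2 \leq r \leq 4$ and concede that for $r=1$ the fourth inequality is indispensable.
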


To determine the movable cone of curves, $\bmov(\M_{0,6})$, we consider intersections of one-cycles with the generators of $\pEff(\M_{0,6})$, that is, with all boundary divisor classes $[\Delta_J]$ and the fifteen Keel-Vermeire divisor classes $[Q_{(ab)(cd)(e6)}]$. We apply an analogous convention used to characterize $\Nef(X_r)$ to the terms $c_i$ and $c_{jk}$ in the inequalities below.

\begin{prop}
\label{lemmaCI:nefCurvesCone}
Let $[C] = c_h [H]^\vee + \sum_{i=1}^r c_i [E_i]^\vee + \sum_{1 \leq j < k \leq r} c_{jk} [E_{jk}]^\vee$ be a one cycle class in $N_1(\M_{0,6})$. The cone of movable curve classes in $X_r$ is determined by the inequalities
\begin{equation*}
 \begin{cases}
 c_{i} \geq 0, &\text{for }i=1, \ldots, r, \\
 c_{jk} \geq 0, &\text{for } 1\leq j < k \leq r,  \\
 c_h - c_i - c_j  - c_k - c_{ij} - c_{ik} - c_{jk} \geq 0, &\text{for }1 \leq i< j < k \leq r, \\
 2 c_h - \sum_{i=1}^5 c_i - c_{jl} - c_{kl} - c_{jm} - c_{km} \geq 0, &\text{for } j,k,l,m \in \{1, \ldots, r\}\text{ distinct}.
 \end{cases}
\end{equation*}
\end{prop}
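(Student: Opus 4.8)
The plan is to dualize. By the theorem of Boucksom--Demailly--P\v aun--Peternell \cite{bdpp} we have $\bmov(X_r) = \pEff(X_r)^\vee$, so a class $[C] \in N_1(X_r)_\RR$ is movable if and only if $[C] \cdot [D] \geq 0$ for every generator $[D]$ of $\pEff(X_r)$. Since the dual Kapranov basis is by construction dual to the Kapranov basis under the intersection pairing, writing $[D] = d_h[H] + \sum_i d_i[E_i] + \sum_{j<k} d_{jk}[E_{jk}]$ and $[C]$ as in the statement gives $[C]\cdot[D] = c_h d_h + \sum_i c_i d_i + \sum_{j<k} c_{jk} d_{jk}$. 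Thus each effective generator, once written in the Kapranov basis, yields one linear inequality, and the whole problem reduces to enumerating the generators of $\pEff(X_r)$ and recording their coordinates.

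First I would list the generators. For $1 \leq r \leq 4$ the variety $X_r$ is toric, so by Proposition \ref{propCI:pseffToric} the extremal rays of $\pEff(X_r)$ are the classes of the torus-invariant prime divisors, namely the exceptional divisors $E_i$ and $E_{jk}$ together with the proper transforms of the coordinate hyperplanes of $\PP^3$. A coordinate hyperplane passing through the points indexed by a set $S$ (and hence through the lines joining pairs in $S$) has class $[H] - \sum_{i \in S}[E_i] - \sum_{\{j,k\}\subseteq S}[E_{jk}]$. For $r=5$ the variety is $\M_{0,6}$, which is not toric; here I would instead use the stated generation of $\pEff(\M_{0,6})$ by the boundary divisors and the fifteen Keel--Vermeire divisors, translating the boundary classes into the Kapranov basis by \eqref{eq:dictionary} and using \eqref{eqCI:KV} for the Keel--Vermeire classes. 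A short bookkeeping check shows that, after identifying $\Delta_J$ with $\Delta_{J^c}$, the boundary divisors reproduce exactly the exceptional divisors and the plane transforms above.

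With the generators in hand, reading off the inequalities is mechanical: $[E_i]$ gives $c_i \geq 0$; $[E_{jk}]$ gives $c_{jk}\geq 0$; the transform of the plane through a triple $\{i,j,k\}$ gives $c_h - c_i - c_j - c_k - c_{ij} - c_{ik} - c_{jk} \geq 0$; and, for $r=5$, the Keel--Vermeire class \eqref{eqCI:KV} gives $2c_h - \sum_{i=1}^5 c_i - c_{jl} - c_{kl} - c_{jm} - c_{km} \geq 0$. The convention that coefficients with impossible indices vanish is exactly what makes the third family specialize correctly for small $r$ (for instance to $c_h - c_1 \geq 0$ when $r=1$, the transform of the plane through the single blown-up point).

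The main point requiring care is completeness together with non-redundancy. I must check that the list of torus-invariant prime divisors is exhausted by the exceptional divisors and the coordinate-hyperplane transforms, so that no further inequality is missing; for $\LM_4$ this matches the $4+6+4 = 14$ facets of the permutohedron. Conversely, for $r \leq 3$ some coordinate hyperplanes pass through fewer of the points and so give ``smaller'' effective classes such as $[H] - [E_i] - [E_j] - [E_{ij}]$, and the class $[H]$ itself; I expect these to yield redundant inequalities, since, e.g., $c_h - c_i - c_j - c_{ij} = (c_h - c_i - c_j - c_k - c_{ij}-c_{ik}-c_{jk}) + c_k + c_{ik} + c_{jk}$ is non-negative once the third-family and non-negativity inequalities hold (and for $r \leq 2$ the convention collapses the third family directly onto the maximal such plane). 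Verifying that every extra generator is a non-negative combination of the listed ones --- equivalently that the four families above already cut out $\pEff(X_r)^\vee$ --- is the step I would spend the most effort on.
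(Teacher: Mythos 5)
Your proposal is correct and takes essentially the same route as the paper: invoke the BDPP duality $\bmov(X_r)=\pEff(X_r)^{\vee}$ and read off one inequality per generator of $\pEff(X_r)$ written in the Kapranov basis, with the boundary classes (via the dictionary \eqref{eq:dictionary}) giving the first three families and the Keel--Vermeire classes \eqref{eqCI:KV} giving the fourth. The paper's own proof is precisely this reading-off argument stated for $\M_{0,6}$; your explicit handling of the toric cases $r\leq 4$ via Proposition \ref{propCI:pseffToric} and the redundancy check for the ``smaller'' hyperplane classes fills in details the paper leaves implicit.
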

\noindent Note that for each inequality of the last type follows from
inequalities of the first three types when $r \leq 4$.

We will give an alternative definition of the complete intersection
cone in Lemma \ref{lemmaCI:cinef2}  involving intersections of nef divisors, so we next write the remaining intersections of elements of the Kapranov basis for $N^1(\M_{0,6})_{\RR}$ in terms of the dual basis.
%%%%%
\begin{prop}
\label{lemmaCI:doubleInt}
The intersections of elements of the Kapranov basis for $X_r$, in terms of the dual basis, are, for distinct $i,j,k,l \in \{1, \ldots, r\}$,

\begin{equation} \label{eq:intZero}
  \; \; \qquad
0 = [H] \cdot [E_i] =  
[E_i] \cdot [E_j] = 
[E_i] \cdot [E_{jk}] =
[E_{ij}] \cdot [E_{kl}] =
[E_{ij}] \cdot [E_{ik}],
\end{equation}
%%%%%%%%%%%
\begin{equation} \label{eq:intDual}
\begin{split}
[H]^2& = [H]^{\vee}, \;
[H]\cdot [E_{jk}]  = [E_j] \cdot [E_{jk}] = [E_k] \cdot [E_{jk}] = -[E_{jk}]^\vee,\\
[E_i]^2&  = [E_i]^{\vee}, \;
[E_{jk}]^2 = 2 [E_{jk}]^\vee - [H]^\vee - [E_j]^\vee - [E_k]^\vee.
\end{split}
\end{equation}
\end{prop}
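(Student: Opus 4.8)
The plan is to realize $X_r$ as a single composite blow-up $\pi\colon X_r \to \PP^3$, so that $[H]=\pi^*h$ for $h$ the hyperplane class, and to verify each asserted identity by pairing the product of divisors in question—which is a one-cycle class—against every element of the Kapranov divisor basis. Since $\{[H],[E_i],[E_{jk}]\}$ and $\{[H]^\vee,[E_i]^\vee,[E_{jk}]^\vee\}$ are dual bases, the coefficient of $[Y]^\vee$ in a one-cycle class $\gamma$ is exactly the number $\gamma\cdot[Y]$. Thus for each product of two basis divisors it suffices to compute its intersection with $[H]$, with each $[E_i]$, and with each $[E_{jk}]$, and to read these numbers off as the dual-basis coordinates. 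The tools are Lemma \ref{lemmaI:projection} (to push $[H]$-terms down to $\PP^3$), disjointness of supports (for the vanishing statements), and the normal-bundle data of the exceptional divisors (for the self-intersections).

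The vanishing identities in \eqref{eq:intZero} all record that the two divisors involved have disjoint support, which I would read off from the general position of the centers: a general hyperplane misses every $x_i$, so $[H]\cdot[E_i]=0$; distinct points give disjoint exceptional divisors, so $[E_i]\cdot[E_j]=0$; for $i\notin\{j,k\}$ the line through $x_j,x_k$ avoids $x_i$, giving $[E_i]\cdot[E_{jk}]=0$; two lines with disjoint index sets are skew in $\PP^3$, giving $[E_{ij}]\cdot[E_{kl}]=0$; and two lines sharing an index meet only at the common point $x_i$, which is blown up \emph{before} the lines, so their proper transforms—and hence $E_{ij}$ and $E_{ik}$—are disjoint. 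The identity $[H]^2=[H]^\vee$ is immediate from the projection formula: $[H]^2=\pi^*(h^2)$ pairs to $1$ with $[H]$, and since $\pi_*[E_i]=\pi_*[E_{jk}]=0$ (both exceptional divisors drop dimension under $\pi$), it pairs to $0$ with every exceptional class.

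For the nonzero identities in \eqref{eq:intDual} I would use the structure of the exceptional divisors as blown-up surfaces. The divisor $E_{jk}$ is a ruled surface $g\colon E_{jk}\to\tilde l_{jk}$ with $\cO(E_{jk})\big|_{E_{jk}}=-\xi$ for the tautological class $\xi$ satisfying $\xi\cdot f=1$ on a fiber $f$. Both $(\pi^*h)\big|_{E_{jk}}$ and $E_j\big|_{E_{jk}}$ restrict to a single fiber $f$ (the line $l_{jk}$ meets a general hyperplane once, and $E_j\cap E_{jk}$ is the fiber over $E_j\cap\tilde l_{jk}$), so intersecting with $E_{jk}\big|_{E_{jk}}=-\xi$ gives $[H]\cdot[E_{jk}]^2=[E_j]\cdot[E_{jk}]^2=-1$; after pairing against the full basis this yields $[H]\cdot[E_{jk}]=[E_j]\cdot[E_{jk}]=[E_k]\cdot[E_{jk}]=-[E_{jk}]^\vee$. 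For $[E_i]^2$, the proper transform $\hat E_i$ is $\PP^2$ blown up at the points where the lines through $x_i$ poke through; because these centers meet $E_i$ transversally and are not contained in it, $\cO(E_i)\big|_{\hat E_i}$ is the pullback of $\cO_{\PP^2}(-1)$, whence $[E_i]^3=1$ and the cross terms vanish, giving $[E_i]^2=[E_i]^\vee$. Finally, the ruled-surface self-intersection gives $[E_{jk}]^3=\xi^2=2$, which together with the three $(-1)$'s computed above assembles into $[E_{jk}]^2=2[E_{jk}]^\vee-[H]^\vee-[E_j]^\vee-[E_k]^\vee$.

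The main obstacle is the self-intersection of the line-exceptional divisor. It requires tracking the proper transform $\tilde l_{jk}$ through the two point blow-ups at its endpoints $x_j,x_k$: starting from $N_{l_{jk}/\PP^3}\cong\cO(1)^{\oplus2}$ of degree $2$, each endpoint blow-up lowers $\deg N$ by $2$ (visible locally, or from $c_1(T)=\sigma^*(4h)-2E_{x_j}-2E_{x_k}$ restricted to $\tilde l_{jk}$, which evaluates to $0$, so $\deg N_{\tilde l_{jk}}=0-2=-2$), whence $E_{jk}=\PP(N^\vee_{\tilde l_{jk}})$ has $\xi^2=2$. One must also pin down the conventions ($E=\PP(N^\vee)$ and $\cO(E)|_E=-\xi$) so that the two independent computations $[H]\cdot[E_{jk}]^2=-1$ and $[E_{jk}]^3=2$ are simultaneously consistent, and verify that $E_j\cap E_{jk}$ is a single reduced fiber; once these are settled the remaining cases are routine.
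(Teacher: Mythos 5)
Your proof is correct, and its skeleton is the same as the paper's: the vanishing products in \eqref{eq:intZero} are established by exactly the paper's disjointness-of-supports arguments (including the key observation that $E_{ij}$ and $E_{ik}$ are disjoint because $x_i$ is blown up \emph{before} the lines), and the nonzero products in \eqref{eq:intDual} are obtained by pairing against the Kapranov basis and reading off dual-basis coordinates, which is precisely the paper's device of intersecting $[D']\cdot[D'']$ with an arbitrary divisor $[D]$ and substituting dual basis vectors for its coefficients. The one substantive difference is how the required triple intersection numbers are handled: the paper calls them ``standard calculations'' and defers the details to Chapter 4 of the author's thesis, whereas you compute them from scratch out of the blow-up geometry --- $\deg N_{\tilde\ell_{jk}}=-2$ after the two point blow-ups (via $c_1(T)\cdot\tilde\ell_{jk}=4-2-2=0$), hence $[E_{jk}]^3=2$; $[H]\cdot[E_{jk}]^2=[E_j]\cdot[E_{jk}]^2=-1$ because both divisors restrict to fibers of the ruled surface $E_{jk}$; and $[E_i]^3=1$ because $\cO(E_i)\big|_{E_i}$ is pulled back from $\cO_{\PP^2}(-1)$ (equivalently, $[E_i]$ equals the total transform of the point-blow-up exceptional class, since the line centers are not contained in it). All of these values are correct and mutually consistent with your stated conventions $E=\PP(N^\vee)$, $\cO(E)\big|_E=-\xi$, and they reproduce the coefficients asserted in the proposition. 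So your version buys self-containedness --- the triple products are proved rather than cited --- at the cost of length; the paper's version is shorter but relies on an external reference for exactly the numbers your third and fourth paragraphs establish.
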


%%%%%%%%%%%%%%%%%%%%%%%%%%%%%%%%%%%%%%%%%%%%%%%%%%%%%
Now we turn to the algorithm used to prove Theorem \ref{thmCI:CIvsNef}. We begin with a recasting of the complete intersection cone of a projective variety $X$ with a finitely generated nef cone.
%%%%%
\begin{defn}
\label{defCI:Nefnm1}
For $X$ a smooth projective variety of dimension $d$ with a finitely generated nef cone, define $(\Nef(X))^{d-1} \subseteq N_1(X)_\RR$ as
%%%
\begin{equation}
 (\Nef(X))^{d-1} = \langle  [N_1] \cdot \ldots \cdot [N_{d-1}]: \textrm{ each }[N_i] \textrm{ an extremal ray of }\Nef(X) \rangle_{\geq 0} \nonumber
\end{equation}
%%%
\end{defn}
\noindent Note that finite generation of $\Nef(X)$ implies that $(\Nef(X))^{d-1}$ is a closed cone.
%%%%%

%%%%%%%%%%%%%%%%%%%%%
\begin{lemma}
\label{lemmaCI:cinef2}
The cones $(\Nef(X))^{d-1}$ and $\cix $ are equal.
\end{lemma}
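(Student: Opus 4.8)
The plan is to prove the two inclusions $\ci(X) \subseteq (\Nef(X))^{d-1}$ and $(\Nef(X))^{d-1} \subseteq \ci(X)$ separately, using that $\Nef(X)$ is the closure of the ample cone together with continuity of the intersection pairing. For the first inclusion, let $C$ be a smooth curve obtained as an intersection of $d-1$ ample divisors $A_1, \ldots, A_{d-1}$; then $[C] = [A_1] \cdot \ldots \cdot [A_{d-1}]$ in $N_1(X)_\RR$. Since the nef cone is finitely generated by its extremal rays, each ample (hence nef) class $[A_i]$ can be written as a nonnegative combination $[A_i] = \sum_j \lambda_{ij} [N_{ij}]$ of extremal rays. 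Expanding the product multilinearly, $[A_1] \cdot \ldots \cdot [A_{d-1}]$ becomes a nonnegative combination of products of extremal rays, each of which lies in $(\Nef(X))^{d-1}$ by definition. Hence $[C] \in (\Nef(X))^{d-1}$, and since the latter is closed, $\ci(X) = \overline{\langle [C] \rangle} \subseteq (\Nef(X))^{d-1}$.

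The reverse inclusion is the step I expect to be the main obstacle, because the extremal rays of $\Nef(X)$ are merely nef, not necessarily ample, and products of nef classes need not be represented by actual smooth complete-intersection curves. The strategy is a perturbation-and-limit argument. Given extremal rays $[N_1], \ldots, [N_{d-1}]$, I would fix an ample class $[A]$ and consider the perturbed classes $[N_i] + \epsilon [A]$, which are ample for every $\epsilon > 0$. By a Bertini-type argument one can choose general members in sufficiently divisible ample linear systems representing positive integer multiples of these classes whose complete intersection is a smooth curve; passing to the corresponding curve class and rescaling shows $([N_1] + \epsilon[A]) \cdot \ldots \cdot ([N_{d-1}] + \epsilon[A]) \in \ci(X)$. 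Letting $\epsilon \to 0$ and using continuity of the intersection product on the finite-dimensional space $N_1(X)_\RR$ together with the fact that $\ci(X)$ is closed, the limit $[N_1] \cdot \ldots \cdot [N_{d-1}]$ lies in $\ci(X)$.

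The delicate points to address carefully are the following. First, the intersection class of a \emph{smooth} complete intersection curve must be shown to equal the formal product of the divisor classes; this is standard, but one must ensure the ample divisors can be taken to meet transversally in a smooth irreducible (or at least reduced) curve, which follows from Bertini applied to a very ample multiple, noting that intersection numbers are insensitive to the passage to multiples after rescaling. Second, since $(\Nef(X))^{d-1}$ is generated by products of extremal rays and $\ci(X)$ is a closed convex cone, it suffices to show each generating product lies in $\ci(X)$; the general element then follows by taking nonnegative combinations and closures. Finally, one must confirm that $\ci(X)$ as defined (the closed cone on classes of smooth complete intersections of ample divisors) is genuinely closed and convex so that the limit and the combination steps are legitimate, which is immediate from Definition \ref{defCI:ci}.

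Assembling these, both inclusions give $\ci(X) = (\Nef(X))^{d-1}$, and the finite generation of $\Nef(X)$ guarantees $(\Nef(X))^{d-1}$ is closed, so no further closure is needed on that side. The essential content is the approximation of nef extremal rays by ample classes and the continuity of the intersection pairing; the multilinear expansion handles the easy direction formally.
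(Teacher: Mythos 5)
Your proof is correct and follows essentially the same route as the paper: the inclusion $\ci(X) \subseteq (\Nef(X))^{d-1}$ via multilinear expansion of ample classes in extremal nef rays, and the reverse inclusion by approximating nef extremal rays with ample classes (your $[N_i] + \epsilon[A]$ perturbation) and invoking continuity of the intersection product together with closedness of $\ci(X)$. The paper states this argument very tersely (calling one direction ``obvious'' and citing nef-as-limit-of-ample for the other); your write-up merely supplies the details, in particular the Bertini-and-rescaling step needed to see that products of ample classes genuinely lie in $\ci(X)$ as defined via smooth complete intersection curves.
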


\begin{proof}
 To see that $(\Nef(X))^{d-1} \subseteq \cix$, note first that every
 nef divisor is a limit of ample divisors (see \cite{MR2095471},
 Section 1.4). Since $\cix$ is a closed cone, multilinearity and continuity of the
 intersection product (\cite{MR2095471}, Section 1.1) imply the first
 inclusion. The reverse inclusion is obvious.
\end{proof}

\begin{cor}
\label{corCI:algorithm}
If the nef cone of $X$ is finitely generated, the cones $\ci(X)$ and $\bmov(X)$ coincide if and only if every extremal ray of $\bmov(X)$ is a non-trivial multiple of a generator of $(\Nef(X))^{d-1}$.
\end{cor}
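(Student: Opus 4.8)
The plan is to use Lemma \ref{lemmaCI:cinef2} to replace $\cix$ by $(\Nef(X))^{d-1}$ everywhere, so that the entire statement becomes a claim about the extremal rays of a \emph{finitely generated} cone. By that lemma $\cix = (\Nef(X))^{d-1}$, and since $\Nef(X)$ is assumed finitely generated, so is this cone: there are only finitely many products $[N_1]\cdots[N_{d-1}]$ of extremal rays of $\Nef(X)$, and by Definition \ref{defCI:Nefnm1} they form an explicit (possibly non-minimal) generating set. I would also recall that the inclusion $\cix \subseteq \bmov(X)$ holds unconditionally, being the case $X' = X$ of Definition \ref{defCI:movable}. Hence the two cones coincide exactly when $\bmov(X) \subseteq \cix$, and the whole argument reduces to controlling this single inclusion via extremal rays.

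First I would record two cone-theoretic facts. Since $\bmov(X) \subseteq \mori(X)$ and $\mori(X)$ is strongly convex (an ample class is strictly positive on its nonzero elements), the cone $\bmov(X)$ is a pointed closed convex cone in the finite-dimensional space $N_1(X)_\RR$, and therefore equals the closed convex hull of its extremal rays. Second, in any finitely generated cone every extremal ray is spanned by one of the generators: if $v$ spans an extremal ray and $v = \sum_i \lambda_i g_i$ with $\lambda_i \geq 0$ in terms of the generators $g_i$, then extremality forces each $g_i$ with $\lambda_i > 0$ to be a non-negative multiple of $v$, so $v$ is a positive multiple of some $g_i$.

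The forward direction is then immediate: if $\cix = \bmov(X)$, then $\bmov(X) = (\Nef(X))^{d-1}$ is finitely generated, so by the second fact every extremal ray of $\bmov(X)$ is a non-trivial multiple of one of the generators $[N_1]\cdots[N_{d-1}]$, which is precisely the stated condition. For the converse I would assume every extremal ray of $\bmov(X)$ is a non-trivial multiple of a generator of $(\Nef(X))^{d-1} = \cix$; then every such ray lies in the closed convex cone $\cix$. Passing to the closed convex hull and applying the first fact gives $\bmov(X) \subseteq \cix$, and combining with the unconditional reverse inclusion yields equality.

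The routine parts are the two directions once this framework is set up; the step that deserves care is the reduction to extremal rays, since it relies on $\bmov(X)$ being a pointed closed cone that is recovered as the \emph{closed} convex hull of its extremal rays rather than merely their convex hull, and the passage to the closure is harmless only because $\cix$ is itself closed. I expect this to be the main conceptual obstacle in the general statement. In the concrete applications of Theorem \ref{thmCI:CIvsNef}, however, $\bmov(X)$ is in fact finitely generated, so this subtlety disappears and the criterion becomes the finite combinatorial check described after Lemma \ref{lemmaCI:cinef2}.
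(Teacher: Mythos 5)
Your proposal is correct and matches the paper's (implicit) reasoning: the paper states this corollary without a written proof, intending exactly the reduction you give via Lemma \ref{lemmaCI:cinef2} to the finitely generated cone $(\Nef(X))^{d-1}$, combined with the unconditional inclusion $\cix \subseteq \bmov(X)$ and standard facts about pointed closed cones in finite dimensions. Your write-up simply supplies the convex-geometry details (pointedness of $\bmov(X)$ inside $\mori(X)$, recovery of a pointed closed cone from its extremal rays, and extremal rays of a finitely generated cone being among its generators), all of which are handled correctly.
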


This corollary leads directly to an algorithm to test the equality of
$\cix$ and $\bmov(X)$ when $\Nef(X)$ and $\bmov(X)$ are finitely
generated, and all necessary intersections are known. We describe the
algorithm in detail for a projective three-fold; the extension of the
algorithm to higher-dimensional varieties will be obvious. 

\begin{algorithm}
Determine if $\cix = \bmov(X)$ for a smooth projective threefold.
\begin{description}
\item[Input]Extremal rays $\gamma_1, \ldots, \gamma_s$
of $\bmov(X)$ with respect to the basis $\cB_1$ of $N_1(X)_\RR$; 
extremal rays $\eta_1, \ldots, \eta_r$ of
$\Nef(X)$ with respect to the dual basis (with respect to the
intersection product) $\cB^1$ of $N^1(X)_\RR$;
intersection products of pairs from $\cB^1$ in the basis $\cB_1$.
\item[Output]Extremal rays $\gamma_{i_1}, \ldots, \gamma_{i_t}$ of
$\bmov(X)$ not in $\cix$ to a file \verb+NotEq+.
\end{description}

\begin{enumerate}
\item[1.a.] Read in $\gamma_1$.
\item[1.b.] For each pair $1 \leq i \leq j \leq r$,  calculate $\eta_i
  \cdot \eta_j$ with respect to $\cB_1$. If $\eta_i \cdot \eta_j$ is a
  non-trivial multiple of $\gamma_1$, continue to the next pair $1
  \leq i' \leq j' \leq r$. Otherwise output $\gamma_1$ to \verb+NotEq+ and
  continue to the next pair. 
\item[2.a.] Read in $\gamma_2$.
\item[2.b.] \ldots \\
\item[$s$.a.]Read in $\gamma_s$.
\item[$s$.b.] For each pair $1 \leq i \leq j \leq r$,  calculate $\eta_i
  \cdot \eta_j$ with respect to $\cB_1$. If $\eta_i \cdot \eta_j$ is a
  non-trivial multiple of $\gamma_s$, continue to the next pair $1
  \leq i' \leq j' \leq r$. Else output $\gamma_s$ to \verb+NotEq+ and
  continue to the next pair. 
\end{enumerate}
\end{algorithm}

The cones $\cix$ and $\bmov(X)$ are equal precisety when the file
\verb+NotEq+ is empty after running the algorithm. Note that the
second step for each ray $\gamma_i$ involves recalculating all generators for $(\Nef(X))^2$. This apparent inefficiency is in practice preferable to storing every generator $\eta_{t_1} \cdot \eta_{t_2}$ in an array due to memory requirements and the computational time required to access elements in this array.

An implementation of the algorithm as a C++ program for $X_r$, $r=2, 
\ldots, 5$, is available at \href{http://www.math.hu-berlin/~larsen/papers.html}{www.math.hu-berlin/$\sim$larsen/papers.html} (for
$X_1$ the algorithm is easy to implement by hand). We obtain
enumerations of the extremal rays of $\Nef(X)$ and $\bmov(X)$ by
inputting the inequalities from Propositions \ref{lemmaCI:nefCone} and
\ref{lemmaCI:nefCurvesCone} into software that implements
Fourier-Motzkin elimination, such as \verb+PORTA+ \cite{porta}. These \verb+PORTA+
files are also available at \href{http://www.math.hu-berlin/~larsen/papers.html}{www.math.hu-berlin/$\sim$larsen/papers.html}.  Intersections of pairs of nef divisors are calculated according to Proposition \ref{lemmaCI:doubleInt}. Running these programs yields:
\begin{cor}
\label{corCI:CIvsNef}
There is a strict inclusion $\ci(\M_{0,6}) \subsetneq \bmov(\M_{0,6})$, while $\ci(X_r) = \bmov(X_r)$ for $r=1, \ldots, 4$.
\end{cor}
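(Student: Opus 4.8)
The plan is to reduce the entire statement to the finite combinatorial test furnished by Corollary \ref{corCI:algorithm}, so the first task is to check that its hypotheses hold for each $X_r$, $1 \le r \le 5$. Here $\Nef(X_r)$ is finitely generated — automatically for the toric members $r \le 4$, and by the cited finiteness result for $\M_{0,6} = X_5$ — and $\bmov(X_r) = \pEff(X_r)^\vee$ is finitely generated because $\pEff(X_r)$ is (boundary divisors together with the Keel--Vermeire generators in the case of $\M_{0,6}$). With both cones finitely generated, Corollary \ref{corCI:algorithm} applies, and the desired equality $\ci(X_r) = \bmov(X_r)$ becomes equivalent to the purely combinatorial assertion that every extremal ray of $\bmov(X_r)$ is a positive multiple of one of the pairwise products $\eta_i \cdot \eta_j$ of extremal nef rays.

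Next I would assemble the input data. Propositions \ref{lemmaCI:nefCone} and \ref{lemmaCI:nefCurvesCone} present $\Nef(X_r)$ and $\bmov(X_r)$ as explicit intersections of half-spaces in the Kapranov and dual Kapranov coordinates; running Fourier--Motzkin elimination (e.g. \verb+PORTA+) on these inequality descriptions converts them into $V$-descriptions, that is, the finite lists of extremal rays $\eta_1, \ldots, \eta_r$ of $\Nef(X_r)$ and $\gamma_1, \ldots, \gamma_s$ of $\bmov(X_r)$. I would then invoke the bilinear intersection table of Proposition \ref{lemmaCI:doubleInt}, together with the vanishing relations \eqref{eq:intZero}, to compute for every unordered pair $(i,j)$ the class $\eta_i \cdot \eta_j \in N_1(X_r)_\RR$ in the dual Kapranov basis. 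By Lemma \ref{lemmaCI:cinef2} these products are precisely the generators of $(\Nef(X_r))^{2} = \ci(X_r)$.

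The verification step is then mechanical: for each extremal ray $\gamma_k$ of $\bmov(X_r)$, scan the list of products and test whether $\gamma_k$ is a positive scalar multiple of some $\eta_i \cdot \eta_j$. The logic underlying sufficiency is that $\ci(X_r) \subseteq \bmov(X_r)$ always holds, so any extremal ray of the larger cone that lies in $\ci(X_r)$ is automatically extremal in $\ci(X_r)$, hence proportional to one of its generators; thus a single failed ray certifies strict inclusion, while passing on all rays certifies equality. For $r \le 4$ I expect every $\gamma_k$ to pass, consistent with the remark after Proposition \ref{lemmaCI:nefCurvesCone} that the Keel--Vermeire-type inequality $2c_h - \sum_{i=1}^5 c_i - c_{jl} - c_{kl} - c_{jm} - c_{km} \ge 0$ is redundant in this range, so that $\bmov(X_r)$ carries no facets beyond the toric ones. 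For $r = 5$ this last inequality becomes an independent facet of $\bmov(\M_{0,6})$, and I expect it to spawn new extremal rays, at least one of which is not proportional to any pairwise product of nef generators; exhibiting such a ray is what delivers the strict inclusion.

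The main obstacle is twofold. Computationally, $\Nef(\M_{0,6})$ has many extremal rays, so the pairwise-product list is large and the Fourier--Motzkin conversions are substantial; correctness rests on executing these enumerations accurately, which is why one appeals to verified software rather than to hand computation. Conceptually, the delicate point is the strict inclusion for $\M_{0,6}$: it is not enough to observe a failure in the scan — one wants to isolate a specific extremal movable class $\gamma$, confirm that it is genuinely extremal in $\bmov(\M_{0,6})$ (i.e. that it saturates enough independent defining inequalities from Proposition \ref{lemmaCI:nefCurvesCone} to pin down a ray), and confirm that it cannot be written as a positive multiple of any $\eta_i \cdot \eta_j$. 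I expect this obstruction class to be tied to the Keel--Vermeire facet, and therefore to be detectable precisely because it lives on the inequality that is redundant for $r \le 4$ but essential for $r = 5$.
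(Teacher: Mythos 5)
Your proposal is correct and follows essentially the same route as the paper: the paper proves this corollary precisely by feeding the inequality descriptions of Propositions \ref{lemmaCI:nefCone} and \ref{lemmaCI:nefCurvesCone} into Fourier--Motzkin elimination (\verb+PORTA+) to enumerate extremal rays, computing the pairwise nef products via Proposition \ref{lemmaCI:doubleInt}, and running the scan of Corollary \ref{corCI:algorithm}. Your justification of why a failed ray certifies strictness (an extremal ray of the larger cone lying in the subcone must be extremal there) is exactly the logic underlying the paper's algorithm.
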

\noindent A mostly by-hand implementation of the algorithm appears in
Example \ref{exCI:toricthreefold}.
\begin{cor}
Extremal rays of $\bmov(\M_{0,6})$ not contained in $\ci(\M_{0,6})$
intersect the canonical class of $\M_{0,6}$ negatively.
\end{cor}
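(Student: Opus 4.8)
The plan is to prove the stronger statement that \emph{every} nonzero class $\gamma \in \bmov(\M_{0,6})$ satisfies $K_{\M_{0,6}} \cdot \gamma < 0$; the corollary then follows immediately, since the extremal rays output to \verb+NotEq+ are in particular nonzero movable classes. The mechanism is that on $\M_{0,6}$ the canonical class is a strictly negative combination of the boundary divisors, against which movable classes pair non-negatively.

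First I would compute $K_{\M_{0,6}}$ in the Kapranov basis. Using the blow-up description $X_5 = \M_{0,6} \to \PP^3$ and the discrepancy formula $K_{\tilde X} = \pi^* K_X + (\mathrm{codim}-1)E$, the five point blow-ups (codimension three) contribute $2[E_i]$ and the ten line blow-ups (codimension two) contribute $[E_{jk}]$; since each line center meets the relevant point-exceptional divisor transversally, the point divisors pull back to their proper transforms, giving
\[
K_{\M_{0,6}} = -4[H] + 2\sum_{i=1}^5 [E_i] + \sum_{1 \le j < k \le 5}[E_{jk}].
\]
Converting this to the boundary basis via \eqref{eq:dictionary} (or, equivalently, specializing the general expression $K_{\M_{0,n}} = \sum_J \big(\tfrac{|J|(n-|J|)}{n-1} - 2\big)\Delta_J$ to $n=6$), one obtains
\[
K_{\M_{0,6}} = -\tfrac{2}{5}\sum_{|J|=2}\Delta_J - \tfrac{1}{5}\sum_{|J|=3}\Delta_J,
\]
the sums running over all boundary divisors (with $\Delta_J = \Delta_{J^c}$). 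The crucial point is that \emph{every} coefficient is strictly negative: this amounts to $|J|(6-|J|) < 2\cdot 5$ for $2 \le |J| \le 4$, whose maximum $9$ (attained at $|J|=3$) is indeed less than $10$.

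With this in hand the argument is purely formal. Any $\gamma \in \bmov(\M_{0,6})$ lies in $\pEff(\M_{0,6})^\vee$, so $\Delta_J \cdot \gamma \ge 0$ for each effective boundary divisor. Writing $K_{\M_{0,6}} = \sum_J c_J \Delta_J$ with every $c_J < 0$, we get $K_{\M_{0,6}} \cdot \gamma = \sum_J c_J(\Delta_J \cdot \gamma) \le 0$. Equality would force $\Delta_J \cdot \gamma = 0$ for all $J$; but the boundary divisors span $N^1(\M_{0,6})_\RR$, so this would force $\gamma = 0$. Hence $K_{\M_{0,6}} \cdot \gamma < 0$ for every nonzero movable class, and in particular for the extremal rays of $\bmov(\M_{0,6})$ not contained in $\ci(\M_{0,6})$.

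I expect the only genuine obstacle to be the canonical-class computation: one must carry out the proper-transform bookkeeping in the blow-up (or correctly specialize the $\M_{0,n}$ formula) and, most importantly, verify that the boundary coefficients are all \emph{strictly} negative. This strict negativity is special to small $n$ — already at $n=7$ the coefficient for $|J|=3$ vanishes — which fits the paper's focus on $\M_{0,6}$. Once the formula is secured, the duality step is immediate and, notably, does not even require the explicit enumeration of the \verb+NotEq+ rays.
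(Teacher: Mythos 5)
Your proof is correct, and it takes a genuinely different route from the paper's. The paper's argument is purely computational: it records $[K_{\M_{0,6}}] = -4[H] + 2\sum_i [E_i] + \sum_{j<k}[E_{jk}]$ and then verifies the claim by inspecting the finitely many extremal rays written to \texttt{NotEq} by the algorithm. You instead prove the stronger, enumeration-free statement that \emph{every} nonzero class in $\bmov(\M_{0,6})$ meets the canonical class strictly negatively --- equivalently, that $-K_{\M_{0,6}}$ is big. Your three ingredients are all sound and available in the paper: the boundary expression $K_{\M_{0,6}} = -\tfrac{2}{5}\sum_{|J|=2}\Delta_J - \tfrac{1}{5}\sum_{|J|=3}\Delta_J$, which is consistent with the paper's Kapranov-basis formula via the dictionary \eqref{eq:dictionary} (the coefficients of $[H]$, $[E_i]$, $[E_{jk}]$ indeed come out to $-4$, $2$, $1$); the easy inclusion $\bmov(\M_{0,6}) \subseteq \pEff(\M_{0,6})^{\vee}$ from the introduction, giving $\Delta_J \cdot \gamma \geq 0$; and the fact, stated after Proposition \ref{propI:FcurvePartition}, that boundary classes span $N^1(\M_{0,6})_\RR$, which upgrades $K_{\M_{0,6}} \cdot \gamma \leq 0$ to strict inequality for $\gamma \neq 0$ by perfectness of the intersection pairing. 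What your approach buys: it needs neither the algorithm's output nor even the enumeration of the extremal rays of $\bmov(\M_{0,6})$, and it isolates the structural reason the statement holds --- strict negativity of all boundary coefficients of the canonical class --- which, as you correctly note, already fails at $n=7$ where the coefficient of the $|J|=3$ boundary vanishes. What the paper's approach buys: it needs no canonical-class formula in the boundary basis and no spanning argument, only the Kapranov-basis expression and a finite check on data the algorithm has already produced.
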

\begin{proof}
The canonical class of $\M_{0,6}$ is
\begin{equation*}
[K_{\M_{0,6}}] = -4[H] + 2 \sum_{i=1}^5 [E_i] + \sum_{1 \leq j \leq k
  \leq 5} [E_{jk}].
\end{equation*}
Inspection of the file \verb+NotEq+ for $r=5$ then yields the result.
\end{proof}

An obvious question to ask is whether the complete intersection and
movable cones coincide for all smooth projective toric varieties. We
next give an example of a toric blow-up of $\PP^3$ for which the complete intersection cone is strictly contained in the movable cone.
\begin{figure}
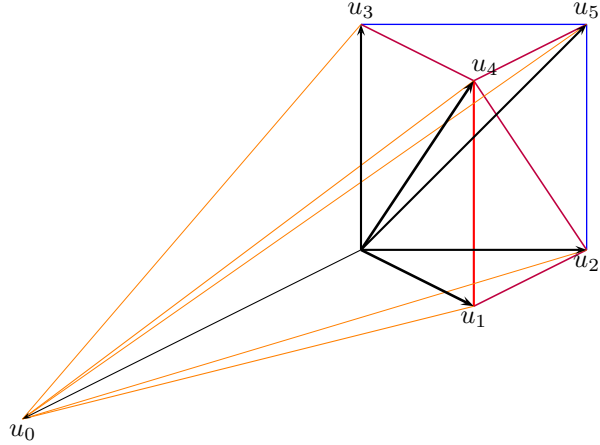

\centering{
\toricthreefold
}
\caption{Fan of the toric variety from Example \ref{exCI:toricthreefold}}
\label{figCI:toricthreefold}
\end{figure}

\begin{example}
\label{exCI:toricthreefold}
Let $Y_2$ be the toric variety obtained by blowing up $\PP^3$ first in
the line $V(z_1, z_3)$, followed by the blow-up of the
proper-transform of the line $V(z_1, z_2)$, where $\CC[z_0, z_1, z_2,
z_3]$ is the homogeneous coordinate ring of $\PP^3$. The variety $Y_2$ is smooth and projective, and its fan $\Sigma$ is depicted in Figure \ref{figCI:toricthreefold}, with the other segments indicating the two-faces of the fan. The standard facts about divisor classes and intersection theory on toric varieties reviewed and used in this example can be found in \cite{MR2810322}, Chapters 4 and 6.

The primitive generators $u_0, \ldots, u_5$ of the fan of $Y_2$ are, respectively,
\begin{equation*}
\left(\begin{array}{r} -1 \\ -1  \\ -1 \end{array} \right),
\left(\begin{array}{r} 1 \\ 0 \\ 0 \end{array} \right),
\left(\begin{array}{r} 0 \\ 1 \\ 0 \end{array} \right),
\left(\begin{array}{r} 0 \\ 0 \\ 1 \end{array} \right),
\left(\begin{array}{r} 1 \\ 0 \\ 1\end{array} \right),
\left(\begin{array}{r} 0 \\ 1 \\ 1\end{array} \right).
\end{equation*}
As usual, we label the torus-invariant divisors via the (primitive
generators of the) rays of $\Sigma(1)$, so $N^1(Y_2)_\RR$ is generated
by the classes of the divisors $D_{0}, \ldots, D_5$. The relations among the classes of divisors $D_i$ are generated by
\begin{align*}
&[D_1] + [D_4] - [D_0] = 0, \\
&[D_2] + [D_5] - [D_0] = 0, \\
&[D_3] + [D_4] + [D_5] - [D_0] = 0. 
\end{align*}

 We now perform the calculations of Algorithm \ref{corCI:algorithm} to compare $\ci(Y_2)$ and $\bmov(Y_2)$.  It is clear from the relations in $N^1(Y_2)_\RR$ that the pseudoeffective cone is
\begin{equation}
\pEff(Y_2) = \langle [D_3], [D_4], [D_5] \rangle_{\geq 0}. \nonumber
\end{equation}
We therefore choose the basis for $N^1(Y_2)_\RR$ consisting of the classes of $D_3$, $D_4$, and $D_5$, while for $N_1(Y_2)_\RR$ we take the corresponding dual basis. It follows that the movable cone is
\begin{equation}
\bmov(Y_2) = \langle [D_3]^{\vee}, [D_4]^{\vee}, [D_5]^{\vee} \rangle_{\geq 0}, \nonumber
\end{equation}
or, in coordinates, the non-negative orthant of $\RR^3$.

As noted in Proposition \ref{propCI:pseffToric}, the closed cone of curves of $Y_2$ is generated by classes of orbit closures $V(\tau)$, $\tau \in \Sigma(2)$, and we will label them as $V(\tau) = C_{i,j}$, where $i$ and $j$ index the rays generating $\tau$.

Writing an arbitrary divisor class as $[D] = d_3 [D_3] + d_4 [D_4] + d_5 [D_5]$, the nef cone of $Y_2$ is defined by the inequalities
\begin{align*}
   d_4 & =[C_{0,1}] \cdot [D] = [C_{1,2}] \cdot [D] = [C_{1,4}] \cdot [D]= [C_{2,5}] \cdot [D] \geq 0, \\
 d_5 &=[ C_{0,2}] \cdot[ D]  \geq 0, \\
 -d_4 + d_5 & = [C_{2,4}] \cdot[ D] \geq 0, \\
-d_3 + d_4 + d_5 & =[ C_{0,3}] \cdot [D] = [C_{3,4}] \cdot[ D] = [C_{3,5}] \cdot[ D] \geq 0, \\
 d_3 - d_5 & = [C_{0,5}]\cdot[ D] =[ C_{4,5}] \cdot[ D]  \geq 0, \\
 d_3 - d_4 & = [C_{0,4}] \cdot[ D] \geq 0.
\end{align*}
These intersections can be calculated via the geometry of the fan of $Y_2$ as in \cite{MR2810322}, Section 6.3.

For example, to obtain the fourth inequality above, we intersect $D$ with the curve $C_{0,3}$. Setting $C_{0,3} = V(\tau)$, with $\tau = \langle u_0, u_3 \rangle_{\geq 0}$, note that $\tau$ is contained precisely in the full-dimensional cones $\langle u_4, u_0, u_3 \rangle_{\geq 0}$ and $\langle u_0, u_3, u_5 \rangle_{\geq 0}$. We obtain from the coefficients of the linear dependence relation
\begin{equation*}
(1)\left(
\begin{array}{r}
	1  \\
	0  \\
	1 
	\end{array} \right)
+(1)\left(
\begin{array}{r}
-1 \\
-1 \\
-1
\end{array}
\right)
+(-1)
\left(
\begin{array}{r}
0 \\
0 \\
1
\end{array}
\right)
+
(1)\left(
\begin{array}{r}
0 \\
1 \\
1
\end{array}
\right)
=
\left(
\begin{array}{r}
0 \\
0 \\
0
\end{array}
\right)
\end{equation*}
the intersection numbers $D_4 \cdot C_{0,3} = 1$, $D_0 \cdot C_{0,3} = 1$, $D_3 \cdot C_{0,3} = -1$, and $D_5 \cdot C_{0,3} = 1$, with all remaining intersection numbers equal to zero.

In particular, we obtain the coordinates for $C_{0,3}$ in the dual basis $N_1(Y_2)_\RR$: 
\begin{equation*}
[C_{0,3}] = -[D_3]^{\vee} + [D_4]^\vee + [D_5]^\vee = (-1,1,1).
\end{equation*}
Coordinates of the other generators of $N_1(Y_2)_\RR$ with respect to
the dual basis $\{ [D_3]^{\vee}, [D_4]^{\vee}, [D_5]^{\vee} \}$ are:
\begin{align*}
 &[C_{0,1}] = [C_{1,2} ] = [C_{1,4} ]= [C_{2,5} ] = (0,1,0), \\
 &[C_{0,2} ]  = (0,0,1) \\
 & [C_{2,4} ] = (0,-1,1)\\
 & [C_{0,3} ] = [C_{3,4} ] = [C_{3,5} ] =(-1,1,1), \\
 & [C_{0,5}] = [C_{4,5}] = (1,0,-1), \\
 &  [C_{0,4}] = (1,-1,0).
\end{align*}

By a \verb+PORTA+ calculation, the nef cone is
\begin{equation*}
\Nef(Y_2) = \langle [D_3] + [D_5], 2[D_3] + [D_4] + [D_5], [D_3] + [D_4] + [D_5] \rangle_{\geq 0}.
\end{equation*}
We denote the three extremal rays by $\eta_1$, $\eta_2$, and $\eta_3$, respectively. To calculate all pairs of intersections $\eta_i \cdot \eta_j$, we first calculate $[D_r] \cdot [D_s]$ for $r,s = 3,4,5$. For self-intersections, we rewrite the divisor using the relations in $N^1(Y_2)_\RR$ to make the intersection transverse. For example,

\begin{equation*}
[D_3]^2 = [D_3] \cdot([D_0] - [D_4] - [D_5])  = [C_{0,3}] - [C_{3,4}] - [C_{3,5}].
\end{equation*}
\end{example}
\noindent With respect to the dual basis $\{[D_3]^{\vee}, [D_4]^{\vee}, [D_5]^{\vee}\}$, we obtain $[D_3]^2 = (1, -1, -1)$. The other intersections are obtained analogously:
\begin{align*}
[D_4]^2 & = (1, -2, 0), \\
[D_5]^2 &= (1, -1, -1), \\
[D_3] \cdot [D_4] &= (-1, 1, 1), \\
[D_3] \cdot [D_5] &= (-1, 1, 1), \\
[D_4] \cdot [D_5] &= (1, 0, -1).
\end{align*}

Finally, we calculate the generators $\eta_i \cdot \eta_j$, $1 \leq i \leq j \leq 3$, in the dual basis $N_1(Y_2)_\RR$ by using the above intersections among the basis elements of $N^1(Y_2)_\RR$:
\begin{align*}
\eta_1^2 &= ([D_3] + [D_5])^2 = (0,0,0),\\
\eta_2^2 &= ( 2[D_3] + [D_4] + [D_5])^2 = (0,1,1),\\
\eta_3^2 &= ([D_3] + [D_4] + [D_5])^2  = (1,0,0),\\
\eta_1 \cdot \eta_2 &=( [D_3] + [D_5] ) \cdot ( 2[D_3] + [D_4] + [D_5] )  = (0,1,0),\\
\eta_1 \cdot \eta_3 &= ( [D_3] + [D_5] ) \cdot ([D_3] + [D_4] + [D_5])  = (0,1,0),\\
\eta_2 \cdot \eta_3 &= ( 2[D_3] + [D_4] + [D_5] ) \cdot([D_3] + [D_4] + [D_5])   = (0,1,1).
\end{align*}
Since the extremal ray $(0,0,1)$ of $\bmov(Y_2)$ does not appear among the generators of $\ci(Y_2)$, it follows that $\ci(Y_2) \subsetneq \bmov(Y_2)$.

To conclude this section, we use basic polyhedral geometry to give one example of how permutohedral spaces partially encode the geometry of $\M_{0,n}$. Namely, we show that extremal rays of the movable cone of $\LM_{n-2}$ pull back to extremal rays of the movable cone of $\M_{0,n}$. Recalling the Kapranov blow-up construction, we define $f: \M_{0,n} \to \LM_{n-2}$ to be the final (non-toric) composition of blow-ups.

%%%%%
\begin{prop}
\label{propCI:extRayMov}
Let $\gamma$ be an extremal ray of $\bmov(\LM_{n-2})$. Then $f^*(\gamma)$ is an extremal ray of $\bmov(\M_{0,n})$.
\end{prop}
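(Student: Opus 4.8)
The plan is to work entirely through the pushforward and pullback maps attached to $f$, avoiding any direct description of $\pEff(\M_{0,n})$ (which is complicated by the extra Keel--Vermeire generators). Write $Y = \LM_{n-2}$ and $X = \M_{0,n}$, let $f_*\colon N_1(X)_\RR \to N_1(Y)_\RR$ be the pushforward, and let $f^*\colon N_1(Y)_\RR \to N_1(X)_\RR$ be its adjoint under the intersection pairing, so that $\delta \cdot f^*\gamma = f_*\delta \cdot \gamma$ for all $\delta \in N^1(X)_\RR$; this is the content of the projection formula (Lemma~\ref{lemmaI:projection}). Since $f$ is birational, $f_* f^* = \mathrm{id}$ on $N_1(Y)_\RR$, so $f^*$ is injective and $f^*\gamma \neq 0$. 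Combining the projection formula with the facts that pushforwards and pullbacks of effective divisors are effective, the duality $\bmov(\,\cdot\,) = \pEff(\,\cdot\,)^\vee$ of \cite{bdpp} yields both inclusions $f^*\bmov(Y) \subseteq \bmov(X)$ and $f_*\bmov(X) \subseteq \bmov(Y)$. In particular $f^*\gamma \in \bmov(X)$, and it remains only to prove extremality.

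Next I would test extremality directly. Suppose $f^*\gamma = c_1 + c_2$ with $c_1, c_2 \in \bmov(X)$. Applying $f_*$ and using $f_* f^* = \mathrm{id}$ gives $\gamma = f_* c_1 + f_* c_2$, a decomposition of $\gamma$ inside $\bmov(Y)$ by the inclusion above. Since $\gamma$ spans an extremal ray of $\bmov(Y)$, necessarily $f_* c_i = \lambda_i \gamma$ with $\lambda_i \geq 0$ and $\lambda_1 + \lambda_2 = 1$. Now decompose $N_1(X)_\RR = f^* N_1(Y)_\RR \oplus V$, where $V$ is spanned by the classes of the curves contracted by $f$ (the fibre lines of the exceptional divisors of the blow-ups defining $f$). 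Writing $c_i = f^* f_* c_i + v_i = \lambda_i f^*\gamma + v_i$ with $v_i \in V$, the equation $c_1 + c_2 = f^*\gamma$ forces $v_1 + v_2 = 0$, so $c_1 = \lambda_1 f^*\gamma + v$ and $c_2 = \lambda_2 f^*\gamma - v$ for a single class $v \in V$. The proof is complete once we show $v = 0$.

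The final step, and the only real obstacle, is to kill the vertical directions in $V$ using the effectivity constraints defining $\bmov(X)$. Let $E_1, \ldots, E_k$ be the exceptional divisors of the blow-ups comprising $f$. Each $E_j$ is effective and satisfies $f^*\gamma \cdot E_j = \gamma \cdot f_* E_j = 0$, because $f_* E_j = 0$ (each $E_j$ maps to a locus of codimension at least two). Hence $c_1 \cdot E_j = v \cdot E_j \geq 0$ and $c_2 \cdot E_j = -\,v \cdot E_j \geq 0$, so $v \cdot E_j = 0$ for every $j$. It therefore suffices to know that the pairing between $V$ and $\langle [E_1], \ldots, [E_k]\rangle$ is non-degenerate, and this is where I expect the bookkeeping to require the most care. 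It follows from the perfectness of the pairing $N^1(X)_\RR \times N_1(X)_\RR \to \RR$ together with its block form relative to $N^1(X)_\RR = f^* N^1(Y)_\RR \oplus \langle [E_j]\rangle$ and $N_1(X)_\RR = f^* N_1(Y)_\RR \oplus V$: one has $f^*\delta' \cdot v = \delta' \cdot f_* v = 0$ and $[E_j] \cdot f^*\gamma' = f_*[E_j] \cdot \gamma' = 0$, so the off-diagonal blocks vanish; the diagonal block $f^* N^1(Y)_\RR \times f^* N_1(Y)_\RR$ is perfect since it is the pairing on $Y$, and therefore the remaining block $\big([E_j] \cdot v_i\big)$ is non-degenerate as well. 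Thus $v = 0$, giving $c_i = \lambda_i f^*\gamma$ and showing that $f^*\gamma$ generates an extremal ray of $\bmov(\M_{0,n})$. The decomposition $N_1(X)_\RR = f^* N_1(Y)_\RR \oplus V$ and the vanishing $f_* E_j = 0$ are standard consequences of the blow-up construction of $f$, and in fact the entire argument applies verbatim to any projective birational morphism of smooth projective varieties.
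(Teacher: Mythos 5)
Your proof is correct, but it takes a genuinely different route to extremality than the paper does. Both arguments begin identically: the adjointness of $f^*$ and $f_*$ under the intersection pairing (the projection formula, Lemma \ref{lemmaI:projection}) plus the duality $\bmov(\,\cdot\,)=\pEff(\,\cdot\,)^\vee$ of \cite{bdpp} give $f^*\gamma\in\bmov(\M_{0,n})$ and $f^*\gamma\neq 0$. For extremality, however, the paper runs a polyhedral, facet-counting argument: extremality of $\gamma$ produces $\rlm-1$ linearly independent supporting hyperplanes of $\bmov(\LM_{n-2})$ vanishing on $\gamma$; these are lifted to $\M_{0,n}$ through $f_*$-preimages and supplemented by the exceptional classes $[E_J]$, $n-1\in J$, which span $\ker f_*$ and vanish on $f^*\gamma$, yielding $\rmn-1$ independent supporting hyperplanes through $f^*\gamma$. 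That argument leans on finite generation (polyhedrality) of $\bmov(\LM_{n-2})$, available because $\LM_{n-2}$ is toric. You instead test extremality on an arbitrary decomposition $f^*\gamma=c_1+c_2$: you push forward (which requires the extra inclusion $f_*\bmov(\M_{0,n})\subseteq\bmov(\LM_{n-2})$, not needed in the paper), invoke the splitting $N_1(\M_{0,n})_\RR=f^*N_1(\LM_{n-2})_\RR\oplus\ker f_*$, and kill the vertical component $v$ by pairing against the effective exceptional divisors, using the block-diagonal structure of the intersection pairing. Both proofs ultimately rest on the same structural facts---the projection formula and the fact that the exceptional classes of $f$ span $\ker f_*$ and annihilate $f^*N_1(\LM_{n-2})_\RR$---but yours buys more: it requires no polyhedrality of any cone, and, as you note, it applies verbatim to any projective birational morphism of smooth projective varieties. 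It also sidesteps a point the paper leaves implicit, namely that the lifted classes $[D_i]$ must be chosen pseudoeffective (for instance $[D_i]=f^*[D_i']$) in order to serve as supporting hyperplanes of $\bmov(\M_{0,n})$; your argument never needs such a choice. The price is the bookkeeping with the two direct-sum decompositions, all of which is justified here since $f$ is a composition of blow-ups along smooth centers.
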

\begin{proof}
Note that 
\begin{align*}
f^*: N_1(\LM_{n-2})_\RR &\to N_1(\M_{0,n})_\RR \text{ and} \\
f_*: N^1(\M_{0,n})_\RR &\to N^1(\LM_{n-2})_\RR
\end{align*}
are dual with respect
to the intersection pairing. Since $f_*$ is surjective, $f^*$ is
injective, and in particular $f^*(\gamma) \neq 0$. Moreover, by the
projection formula \ref{lemmaI:projection} with $[D] \in
\pEff(\M_{0,n})$,
\begin{equation*}
[D] \cdot f^*(\gamma) = f_*([D]) \cdot \gamma \geq 0,
\end{equation*}
since $f_*([D])$ is also an effective divisor class, hence
$f^*(\gamma) \in \bmov(\LM_{n-2})$.

Set $\rlm = \dim N_1(\LM_{n-2})_\RR$ and
$\rmn = \dim N_1(\M_{0,n})_\RR$. Extremality of
$\gamma$ implies that there exist $\rlm - 1$ linearly independent
defining hyperplanes of $\bmov(\LM_{n-2})$ intersecting $\gamma$ with
value zero, i.e. there exist linearly independent divisor classes $[D'_1],
\ldots, [D'_{\rlm - 1}]$ on $\LM_{n-2}$ satisfying $[D'_i] \cdot \gamma = 0$
for all $i$. Next select $\rlm - 1$ divisor classes
$[D_i] \in N^1(\M_{0,n})_\RR$ such that $f_*([D_i]) = [D_i]$ for all
$i$ (the $[D_i]$ are by construction linearly independent).

It is easy to see from the Kapranov blow-up construction that 
\begin{equation*}
\ker f_* = \langle [E_J]:  n-1 \in J\rangle,
\end{equation*}
while the projection formula implies that $[E_j] \cdot \gamma =0$ if
$n-1 \in J$.
Since the collection $\{[D_i]: i=1, \ldots, \rlm\} \cup \{[E_J]:
n-1 \in J\}$ is linearly dependent with cardinality $\rmn - 1$, it follows that $f^*(\gamma)$ is
an extremal ray of $\bmov(\M_{0,n})$.
\end{proof}

By applying this proposition to $\M_{0,6}$ and varying which marked
points are chosen as poles for $\LM_4$ (this choice is explained for
example in \cite{larsenThesis}, Sec. 3.3), we can obtain an enumeration of extremal rays common to $\bmov(\M_{0,6})$ and $\ci(\M_{0,6})$. This collection, however, does not give all common extremal rays: for example, the extremal ray 
\begin{equation}
\gamma = 6 [H]^\vee + 2 \sum_{i =1}^4 [E_i]^\vee + [E_{15}]^\vee + [E_{25}]^\vee + [E_{35}]^\vee, \nonumber
\end{equation}
and its symmetric analogues, is an extremal ray of both $\ci(\M_{0,6})$ and $\bmov(\M_{0,6})$, but it is not the pull-back of an extremal ray from $\bmov(\LM_4)$, as can be seen by examining the \verb+PORTA+ file for $\bmov(\LM_4)$.
%%%%%%%%%%%%%%%%%%%%%%%%%%%%%%%%%%%%%%%%%%%
%                              		Nef cones			 	                                  		%
%%%%%%%%%%%%%%%%%%%%%%%%%%%%%%%%%%%%%%%%%%%
 \section{Calculating complete intersection and movable cones}
 \label{secCI:intTh}

\begin{proof}[Proof of Proposition \ref {lemmaCI:nefCone}]
We present the proof only for $r=5$, since the other cases are standard (for details, see Chapter 4 of \cite{larsenThesis}).
These inequalities will follow by intersecting the divisor class $[D]$ with all classes of F-curves. Using the identification of the hyperplane class with the psi-class $\psi_6$ from \cite{MR1203685}, it is not hard to show that for each partition $\mu = (\mu_1, \mu_2, \mu_3, \mu_4)$,
\begin{equation*}
H \cdot F_\mu = 
\begin{cases}
1 & \text{ if }\mu_i = \{6\} \text{ for some } i, \\
0 & \text{else}.
\end{cases}
\end{equation*}

To intersect F-curves with the remaining elements of the Kapranov basis, we apply Proposition \ref{propI:FcurvePartition} and the dictionary between boundary and exceptional divisor classes from Equations (\ref{eq:dictionary}). The first set of inequalities arise from F-curves with partitions $(\mu_1, \mu_2, \mu_3, \mu_4)$ satisfying $|\mu_i| = 1$ for $1 \leq i \leq 3$, with $6 \in \mu_4$, while the second set of inequalities comes from such partitions with instead $6 \notin \mu_4$. Partitions with $|\mu_1| = |\mu_2| = 1$ and $|\mu_3| = |\mu_4| = 2$ such that $i \in \mu_3 \cup \mu_4$ give the third set of inequalities, while the final set results from such partitions when $6 \in \mu_1 \cup \mu_2$. 

Up to the action of the symmetric group permuting the four elements of the partition (which leaves the numerical class unchanged), the above partitions correspond to all possible partitions corresponding to F-curves in $\M_{0,6}$, so these inequalities define the nef cone. 
\end{proof}
%%%%%

\begin{proof}[Proof of Proposition \ref{lemmaCI:nefCurvesCone}]
Since we represent $[C] \in N_1(\M_{0,6})_\RR$ with respect to the dual Kapranov basis, by duality these inequalities can just be read off of the coordinates of the generators for $\pEff(\M_{0,6})$ expressed in the Kapranov basis for $N^1(\M_{0,6})_\RR$: the first set of inequalities are from intersecting $[C]$ with the $[E_i]$, the second from intersecting with the $[E_{ij}]$, the third from intersecting with $[\D_{ij}]$ where $6 \notin \{i,j\}$, and the last from intersecting with the Keel-Vermeire divisors.
\end{proof}

\begin{proof}[Proof of Proposition \ref{lemmaCI:doubleInt}]
The first equality of (\ref{eq:intZero}) holds since we can always choose a hyperplane not containing any of the points $p_i$. Since exceptional divisors corresponding to disjoint blow-up centers are also disjoint, the remaining equalities follow immediately, with the possible exception of the final one. To see that $E_{ij} \cap E_{ik} = \emptyset$ for $i,j,k$ distinct, let $\ell_{ij}, \ell_{ik} \subseteq \PP^3$ be the corresponding lines, and $p_i$ their intersection. Since the Kapranov construction requires blowing up in order of increasing dimension, after blowing up $p_i$ the proper transforms of $\ell_{ij}$ and $\ell_{ik}$ will be disjoint, giving last equality.

To express the intersections of divisor classes $[D'], [D'']$ in (\ref{eq:intDual}) in the dual basis, we intersect an arbitrary divisor class $[D] = d_h [H] + \sum_{i=1}^r [E_i] + \sum_{j,k = 1}^r [E_{jk}]$ with $[D'] \cdot [D'']$, giving an expression in the coefficients $d_h, d_i,$ and $d_{jk}$. Since the dual bases are related by the intersection product, this expression is the one-cycle $[D'] \cdot [D'']$ in the dual Kapranov basis once we substitute $[H]^\vee$ for $d_h$, $[E_i]^\vee$ for the $d_i$, and $[E_{jk}]^\vee$ for the $d_{jk}$. The triple intersection products required are standard calculations on $\M_{0,6}$ and toric varieties. For details, see Chapter 4 of \cite{larsenThesis}.

\end{proof}

%%%%%

%%%%%%%%%%%%%%%%
\bibliographystyle{alpha}

\bibliography{../../../bibliography.bib}

\def\cprime{$'$}
\begin{thebibliography}{BDPP04}

\bibitem[BB11]{blume1}
Victor Batyrev and Mark Blume.
\newblock The functor of toric varieties associated with {W}eyl chambers and
  {L}osev-{M}anin moduli spaces.
\newblock {\em Tohoku Math. J. (2)}, 63(4):581--604, 2011.

\bibitem[BDPP04]{bdpp}
S{\'e}bastien Boucksom, Jean-Pierre Demailly, Mihai Paun, and Thomas Peternell.
\newblock The pseudo-effective cone of a compact {K}{\"a}hler manifold and
  varieties of negative {K}odaira dimension.
\newblock {\em arXiv math.AG:0405285}, 2004.

\bibitem[Cas09]{MR2491903}
Ana-Maria Castravet.
\newblock The {C}ox ring of {$\overline M\sb{0,6}$}.
\newblock {\em Trans. Amer. Math. Soc.}, 361(7):3851--3878, 2009.

\bibitem[CL]{porta}
Thomas Christof and Andreas L{\"o}bel.
\newblock {\verb+PORTA+}: polyhedron representation transformation algorithm.
\newblock available at http://www.zib.de/Optimization/Software/Porta.

\bibitem[CLS11]{MR2810322}
David~A. Cox, John~B. Little, and Henry~K. Schenck.
\newblock {\em Toric varieties}, volume 124 of {\em Graduate Studies in
  Mathematics}.
\newblock American Mathematical Society, Providence, RI, 2011.

\bibitem[Ful98]{MR1644323}
William Fulton.
\newblock {\em Intersection theory}, volume~2 of {\em Ergebnisse der Mathematik
  und ihrer Grenzgebiete. 3. Folge. A Series of Modern Surveys in Mathematics
  [Results in Mathematics and Related Areas. 3rd Series. A Series of Modern
  Surveys in Mathematics]}.
\newblock Springer-Verlag, Berlin, second edition, 1998.

\bibitem[Har77]{MR0463157}
Robin Hartshorne.
\newblock {\em Algebraic geometry}.
\newblock Springer-Verlag, New York, 1977.
\newblock Graduate Texts in Mathematics, No. 52.

\bibitem[HT02]{MR1941624}
Brendan Hassett and Yuri Tschinkel.
\newblock On the effective cone of the moduli space of pointed rational curves.
\newblock In {\em Topology and geometry: commemorating {SISTAG}}, volume 314 of
  {\em Contemp. Math.}, pages 83--96. Amer. Math. Soc., Providence, RI, 2002.

\bibitem[Kap93a]{MR1237834}
M.~M. Kapranov.
\newblock Chow quotients of {G}rassmannians. {I}.
\newblock In {\em I. {M}. {G}el\cprime fand {S}eminar}, volume~16 of {\em Adv.
  Soviet Math.}, pages 29--110. Amer. Math. Soc., Providence, RI, 1993.

\bibitem[Kap93b]{MR1203685}
M.~M. Kapranov.
\newblock Veronese curves and {G}rothendieck-{K}nudsen moduli space {$\overline
  M\sb {0,n}$}.
\newblock {\em J. Algebraic Geom.}, 2(2):239--262, 1993.

\bibitem[Kee92]{MR1034665}
Sean Keel.
\newblock Intersection theory of moduli space of stable {$n$}-pointed curves of
  genus zero.
\newblock {\em Trans. Amer. Math. Soc.}, 330(2):545--574, 1992.

\bibitem[Kle66]{MR0206009}
Steven~L. Kleiman.
\newblock Toward a numerical theory of ampleness.
\newblock {\em Ann. of Math. (2)}, 84:293--344, 1966.

\bibitem[KM96]{km}
Sean Keel and James McKernan.
\newblock Contractible extremal rays on $\overline{M}\sb{0,n}$.
\newblock {\em arXiv math.AG:9607009}, 1996.

\bibitem[Knu83]{MR702953}
Finn~F. Knudsen.
\newblock The projectivity of the moduli space of stable curves. {II}. {T}he
  stacks {$M\sb{g,n}$}.
\newblock {\em Math. Scand.}, 52(2):161--199, 1983.

\bibitem[Lar10]{larsenThesis}
Paul Larsen.
\newblock {\em Applied Mori theory of the moduli space of stable pointed
  rational curves}.
\newblock PhD thesis, Humboldt-Universit\"at zu Berlin,
  http://edoc.hu-berlin.de/dissertationen/larsen-paul-2010-11-09/PDF/larsen.pdf,
  2010.

\bibitem[Lar11a]{pllCoxRelns}
Paul Larsen.
\newblock Permutohedral spaces and relations in the {C}ox ring of the moduli
  space of stable pointed rational curves.
\newblock {\em arXiv math.AG:1105.5106}, 2011.

\bibitem[Lar11b]{Larsen16112011}
Paul~L. Larsen.
\newblock Fulton's conjecture for $\overline{M}\sb{0,7}$.
\newblock {\em Journal of the London Mathematical Society}, 2011.

\bibitem[Laz04]{MR2095471}
Robert Lazarsfeld.
\newblock {\em Positivity in algebraic geometry. {I}}, volume~48 of {\em
  Ergebnisse der Mathematik und ihrer Grenzgebiete. 3. Folge. A Series of
  Modern Surveys in Mathematics [Results in Mathematics and Related Areas. 3rd
  Series. A Series of Modern Surveys in Mathematics]}.
\newblock Springer-Verlag, Berlin, 2004.
\newblock Classical setting: line bundles and linear series.

\bibitem[LM00]{MR1786500}
A.~Losev and Y.~Manin.
\newblock New moduli spaces of pointed curves and pencils of flat connections.
\newblock {\em Michigan Math. J.}, 48:443--472, 2000.
\newblock Dedicated to William Fulton on the occasion of his 60th birthday.

\bibitem[Pet]{pMov}
Thomas Peternell.
\newblock The movable cone--an example.
\newblock private communication.

\bibitem[Ver02]{MR1882122}
Peter Vermeire.
\newblock A counterexample to {F}ulton's conjecture on {$\overline M\sb
  {0,n}$}.
\newblock {\em J. Algebra}, 248(2):780--784, 2002.

\end{thebibliography}

\end{document}